\documentclass[12pt,a4paper]{article}
\usepackage[utf8]{inputenc}
\usepackage{amsmath}
\usepackage{amsfonts}
\usepackage{amssymb}
\usepackage{amsthm}
\usepackage[colorlinks=true,linkcolor=blue]{hyperref}
\hypersetup{
	citecolor={blue}
}
\usepackage[left=2cm,right=2cm,top=2cm,bottom=2cm]{geometry}
\usepackage{xcolor}
\usepackage[all,cmtip]{xy}
\usepackage{tikz}
\usepackage{tikz-cd}
\usetikzlibrary{decorations.markings}
\usepackage{enumerate}
\usepackage{enumitem}
\setlist[enumerate]{label = (\roman*)}
\graphicspath{{bilder/}}
\usepackage[
  backend=biber,
  style=numeric,
  sorting=nyt,
  doi=false,
  url=false,
  isbn=false,
  giveninits=true
  ]{biblatex}
\renewbibmacro{in:}{}
\addbibresource{SW_GKM.bib}
\usepackage{fancyvrb}

\newcommand{\GL}{\mathrm{GL}}

\newcommand{\Th}{\mathrm{Th}}

\newcommand{\CC}{\mathbb{C}}
\newcommand{\ZZ}{\mathbb{Z}}

\newcommand{\PP}{\mathbb{P}}

\newtheorem{thm}{Theorem}[section]
\newtheorem{prop}[thm]{Proposition}
\newtheorem{cor}[thm]{Corollary}
\newtheorem{lem}[thm]{Lemma}

\theoremstyle{definition}
\newtheorem{rem}[thm]{Remark}
\newtheorem{defn}[thm]{Definition}
\newtheorem{ex}[thm]{Example}

\begin{document}

\title{On the Stiefel--Whitney classes of GKM manifolds}
\author{Oliver Goertsches\footnote{Philipps-Universit\"at Marburg, email:
goertsch@mathematik.uni-marburg.de}, Panagiotis Konstantis\footnote{Universität zu Köln,
email: pako@mathematik.uni-koeln.de}, and Leopold
Zoller\footnote{Ludwig-Maximilians-Universit\"at M\"unchen, email: leopold.zoller@mathematik.uni-muenchen.de}}

\maketitle

\begin{abstract} 
We show that under standard assumptions on the isotropy groups of an integer GKM
manifold, the equivariant Stiefel--Whitney classes of the action are determined
by the GKM graph. This is achieved via a GKM-style description of the
equivariant cohomology with coefficients in a finite field $\ZZ_{p}$ even though in this setting the
restriction map to the fixed point set is not necessarily injective. This closes a gap in our
argument why the GKM graph of a $6$-dimensional integer GKM manifold determines
its nonequivariant diffeomorphism type. We introduce combinatorial
Stiefel--Whitney classes of GKM graphs and use them to derive a nontrivial
obstruction to realizability of GKM graphs in dimension $8$ and higher.
\end{abstract}

\section{Introduction}

In GKM theory, named after an influential paper of Goresky--Kottwitz--MacPherson
\cite{MR1489894}, one associates a labelled graph to certain torus actions on smooth manifolds. Concretely, we consider closed orientable manifolds $M$ with vanishing odd degree cohomology on which a compact torus $T$ acts with finitely many fixed points, such that in each fixed points, the isotropy weights are pairwise linearly independent. In this situation, the orbit space of the one-skeleton of the action is homeomorphic to a graph, and we label its edges with the corresponding isotropy weights. The benefit of this GKM graph of the action is that it encodes a multitude of topological properties, both of the action and of the manifold acted on, such as the (equivariant) cohomology ring.

In order for such statements to hold true with integer coefficients one assumes
additionally that for every point outside the one-skeleton of the action, i.e.,
every point $q\in M$ with $\dim T_q<\dim T-1$, the isotropy group $T_q$ is
contained in a proper subtorus of $T$. This condition is encoded in the GKM graph (see Remark \ref{rem:cond1}) and holds e.g.\ if the isotropy groups are connected. The reason for imposing this condition is that it ensures the Chang--Skjelbred Lemma \cite{MR375357} to be valid with integer coefficients \cite{MR2790824}.

In this note we derive, under the very same assumption, a GKM type description of the equivariant cohomology with $\ZZ_p$ coefficients, see Theorem \ref{T: InjectionInZp}. This is insofar remarkable as the standard method to prove
such statements, namely by embedding the equivariant cohomology into the
equivariant cohomology of the fixed point set, is not applicable here. In fact,
the map induced by the inclusion of the fixed point set is, under our
assumptions, not necessarily injective with coefficients $\ZZ_p$. As a
replacement, we consider the natural map from the disjoint union of the
invariant $2$-spheres into $M$, and show that it induces an injection in
equivariant cohomology for arbitrary coefficients, see Lemma
\ref{lem:inclusionofspheres}. It turns out that the $H^*(BT;\ZZ_p)$-algebra $H^\ast_T(M;\ZZ_p)$ is naturally embedded in the sum $H^\ast_T(\Gamma;\ZZ_p)\oplus B^\ast(\Gamma,p)$ (equipped with an appropriate algebra structure). Here, $H^\ast_T(\Gamma;\ZZ_p)$ is equivariant graph cohomology with $\ZZ_p$ coefficients, see Definition \ref{defn:eqgraphcohom}, and $B^\ast(\Gamma,p) =\bigoplus_{e\in E(\Gamma,p)} H^{\ast-2}(BT;\ZZ_p)$, where $E(\Gamma,p)$ is the set of isotropy weights that are divisible by $p$. 

We give a description of the total equivariant Stiefel--Whitney class as an element of $H^\ast_T(\Gamma;\ZZ_2)\oplus B^\ast(\Gamma,2)$ in terms of the GKM graph (Theorem \ref{Thm:SWclass}). As a corollary we prove that under our assumptions on the isotropy groups, the (equivariant)
Stiefel--Whitney classes are encoded
in the GKM graph. This statement was already used in \cite{MR4088417} in the proof of the fact that the diffeomorphism type of a $6$-dimensional simply-connected GKM manifold is determined by its GKM graph. This gap in the argument is filled by the present note, see Remark \ref{rem:GKZ6Korrektur}. As another application we derive a combinatorial criterion of when a GKM manifold admits an (equivariant) spin structure (Theorem \ref{T: Equivariant spin structures}). Our computation of the equivariant Stiefel--Whitney classes of GKM manifolds motivates a definition of combinatorial Stiefel--Whitney classes for abstract GKM graphs. These are certain elements in $H^\ast_T(\Gamma;\ZZ_2)\oplus B^\ast(\Gamma,2)$, see Definition \ref{defn:eqSW}.

We note that a purely algebraic description of $H_T^*(M;\mathbb{Z}_p)$ is in some sense unremarkable as it can be obtained by tensoring $H_T^*(M;\mathbb{Z})$ with $\mathbb{Z}_p$, and our assumptions ensure that the classical GKM description of $H_T^*(M;\mathbb{Z})$ is valid. However this method is not apt to describe phenomena that are intrinsic to finite coefficients. In fact, the discrepancy between $H_T^*(M;\mathbb{Z})\otimes \mathbb{Z}_p$ and our more $\ZZ_p$-intrinsic description has interesting implications: we show by means of an example that in dimension $8$ and higher, the condition that the combinatorial Stiefel--Whitney classes are contained in $H^*_T(M;\ZZ_2)$ provides a nontrivial obstruction to realizability of abstract GKM graphs, see Theorem \ref{thm:nonrealizable}. For $T^2$-actions in dimension $6$, i.e., for $3$-valent graphs with labels in $\mathbb{Z}^2$, this condition turns out to be contained in the known obstructions for realizability, see Proposition \ref{prop:SW3valent}, which explains why it does not appear as a separate condition in the realization results of \cite{2210.01856v1}. \\

\noindent {\bf Acknowledgements.} We are grateful to Michael Wiemeler for pointing out to us the gap in \cite{MR4088417} mentioned above.

\section{GKM actions}

The type of Lie group actions we consider in this paper are the following, named
after \cite{MR1489894}.

\begin{defn}
An action of a compact torus $T=S^1\times \ldots \times S^1$ on an even-dimensional smooth closed orientable manifold $M$ with $H^{odd}(M;\ZZ)=0$ is called \emph{(integer) GKM} if
\begin{enumerate}
\item its fixed point set $M^T = \{p\in M\mid  T\cdot p = \{p\}\}$ is finite and
\item its one-skeleton $M_1:=\{p\in M\mid \dim T\cdot p\leq 1\}$ is a finite union of $T$-invarant $2$-spheres.
\end{enumerate}
The manifold $M$, together with an integer GKM action, will be called an \emph{(integer) GKM manifold}.
\end{defn}
Often, one also considers rational GKM manifolds, for which one instead requires the rational odd cohomology to vanish. In this paper, only the more restrictive integer case will be relevant.

Given a GKM $T$-action on $M$, the orbit space $M_1/T$ is homeomorphic to a graph, which we will denote by $\Gamma$. Its vertex set equals $M^T$, and every (unoriented) edge connecting two vertices $p$ and $q$ represents a $T$-invariant $2$-sphere $S$ containing the fixed points $p$ and $q$. We equip such an edge $e$ with the label $\alpha(e)\in H^2(BT;\ZZ)/\pm 1\cong \ZZ^{\dim T}/\pm 1$ given by the weight of the $T$-module $T_pS$ (which is, as a real representation, isomorphic to $T_qS$).  This labeling turns $\Gamma$ into what is known as an \emph{abstract GKM graph}, a notion that we will recall now.

Let $\Gamma$ be an $n$-valent graph with finite vertex set $V(\Gamma)$ and finite edge set $E(\Gamma)$. We assume that $E(\Gamma)$ does not contain loops, but multiple edges between vertices are allowed. The edges of $\Gamma$ do not come with a fixed orientation, but if we consider on an edge $e\in E(\Gamma)$ an orientation, then it we can speak about its initial vertex $i(e)$ and its terminal vertex $t(e)$. For an oriented edge $e$, we denote by $\bar{e}$ the same edge with opposite orientation. Given a vertex $v$ we denote by $E_v$ the set of all oriented edges emanating from $v$ with starting vertex $v$.
\begin{defn}
A \emph{connection} on $\Gamma$ is a collection of bijections $\nabla_e:E_{i(e)}\to E_{t(e)}$, for any oriented edge $e$, such that
\begin{enumerate}
\item $\nabla_e (e) = \bar{e}$ and
\item $\nabla_{\bar{e}} = \nabla_e^{-1}$.
\end{enumerate}
\end{defn}
\begin{defn}
Let $k\geq 1$. Then an \emph{(abstract) GKM graph} is an $n$-valent graph $\Gamma$, together with a labelling of the edges $\alpha:E(\Gamma)\to \ZZ^k/\pm 1$, called \emph{axial function}, such that
\begin{enumerate}
\item For any $v\in V$ and $e\neq f\in E_v$, the labels $\alpha(e)$ and $\alpha(f)$ are linearly independent.
\item There exists a connection $\nabla$ on $\Gamma$ which is compatible with
	the axial function in the sense that for every oriented edge $e$ and $f\in
	E_{i(e)}$ there exists sign choices for $\alpha(f)$ and $\alpha(\nabla_{e}f)$
	such that
\[
\alpha(\nabla_e f) \equiv \alpha(f) \mod \alpha(e).
\]
\end{enumerate} 
\end{defn}
It was shown in \cite[Proposition 2.3]{MR4363804} and \cite{MR1823050} that the graph associated to a GKM $T$-action is an abstract GKM graph, i.e., that it admits a connection compatible with the labelling. Note however that the compatible connection is not necessarily unique, so that we do not fix it as part of the structure.

Recall that the equivariant cohomology of a $T$-action on a $T$-space $M$ with coefficients in a ring $A$ is defined as the cohomology $H^*_T(M;A):=H^*(M_T;A)$ of the Borel construction $M_T=M\times_T ET$. 

Throughout the paper we will often impose the following assumption on the action:
\begin{equation}\label{eq:standingassumption}
\text{For every }q\in M\setminus M_1\text{, the isotropy group }T_q\text{ is contained in a proper subtorus of }T.
\end{equation}

	The reason for this requirement is the Chang--Skjelbred Lemma for integer
	coefficients, see \cite[Theorem 2.1]{MR2790824}, which is crucial for many of our considerations. We note that the Lemma holds under more general topological assumptions than the ones made below.
\begin{lem}\label{lem:changskjelbred} For a closed smooth $T$-manifold $M$ satisfying \eqref{eq:standingassumption} such that $H^\ast_T(M;\ZZ)$ is a free $H^\ast(BT;\ZZ)$-module, the sequence
\[
0\longrightarrow H^*_T(M;\ZZ)\longrightarrow H^*_T(M^T;\ZZ)\longrightarrow H^{*+1}_T(M_1,M^T;\ZZ)
\]
is exact, where the middle arrow is induced from the inclusion $M^T\to M$, and the right arrow is the boundary morphism in the long exact sequence in equivariant cohomology of the pair $(M_1,M^T)$.
\end{lem}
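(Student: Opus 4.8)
The plan is to follow the classical localization argument of Chang and Skjelbred, taking care that each step remains valid over $\ZZ$ rather than merely over $\QQ$; this is exactly where the standing assumption \eqref{eq:standingassumption} enters. Throughout I abbreviate $R=H^*(BT;\ZZ)$ and regard all equivariant cohomology groups as $R$-modules. The inputs I would use are the long exact sequences in equivariant cohomology of the pairs $(M,M^T)$, $(M,M_1)$ and $(M_1,M^T)$, together with the Atiyah--Bott--Quillen localization theorem, whose integral version is controlled by \eqref{eq:standingassumption}.

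For injectivity of the restriction $\iota^*\colon H^*_T(M;\ZZ)\to H^*_T(M^T;\ZZ)$, I would show that its kernel is a torsion $R$-module: after inverting the nonzero homogeneous elements of $R$, restriction to the fixed point set becomes an isomorphism by localization, so any class mapping to zero in $H^*_T(M^T;\ZZ)$ is torsion. As $H^*_T(M;\ZZ)$ is free by hypothesis, it is torsion-free, and hence the kernel vanishes. The inclusion $\mathrm{im}\,\iota^*\subseteq\ker\delta$ is then formal: the restriction $\iota^*$ factors as $H^*_T(M;\ZZ)\xrightarrow{\,a\,}H^*_T(M_1;\ZZ)\xrightarrow{\,b\,}H^*_T(M^T;\ZZ)$, and $\delta\circ b=0$ because $b$ and $\delta$ are consecutive arrows in the long exact sequence of the pair $(M_1,M^T)$.

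The substantial direction is $\ker\delta\subseteq\mathrm{im}\,\iota^*$. Exactness of the sequence of $(M_1,M^T)$ identifies $\ker\delta$ with $\mathrm{im}\,b$, so a class $x\in\ker\delta$ may be written $x=b(y)$ with $y\in H^*_T(M_1;\ZZ)$; it then suffices to correct $y$ by an element of $\ker b$ so that the result lifts along $a$ to $H^*_T(M;\ZZ)$. By the long exact sequence of $(M,M_1)$ the obstruction to lifting lives in $H^{*+1}_T(M,M_1;\ZZ)$, and the crux is to control this module. Since $M\setminus M_1$ consists of orbits of dimension at least two, that is, of points whose isotropy has corank at least two, $H^*_T(M,M_1;\ZZ)$ is a torsion $R$-module whose support has codimension at least two---and it is precisely \eqref{eq:standingassumption} that guarantees this integrally, by excluding codimension-one contributions coming from finite isotropy groups that fail to lie in a proper subtorus. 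Feeding this codimension bound, together with the freeness (hence reflexivity) of $H^*_T(M;\ZZ)$, into a diagram chase across the three long exact sequences then shows that every such $y$ can be corrected into $\mathrm{im}\,a$, which completes the proof.

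I expect the main obstacle to be the integral support estimate on $H^*_T(M,M_1;\ZZ)$: over $\QQ$ it is immediate from localization, whereas over $\ZZ$ one must rule out the $p$-torsion phenomena arising from finite isotropy groups, which is the sole purpose of \eqref{eq:standingassumption}. Once that codimension-two bound is established, the remaining homological bookkeeping---equivalently, the statement that freeness makes $H^*_T(M;\ZZ)$ a second syzygy module---is routine.
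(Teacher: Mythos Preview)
The paper does not give its own proof of this lemma; it is quoted from Franz--Puppe \cite{MR2790824}. Your sketch is essentially the argument carried out there, so your approach agrees with the one the paper defers to. The three ingredients you isolate---injectivity of $\iota^*$ via localization and freeness, the formal inclusion $\mathrm{im}\,\iota^*\subseteq\ker\delta$, and the codimension-two support bound on $H^*_T(M,M_1;\ZZ)$ coming from Condition~\eqref{eq:standingassumption}---are exactly the right ones, and your identification of the integral support estimate as the place where \eqref{eq:standingassumption} does its work is accurate.

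One suggestion on the final step: rather than a diagram chase in which you correct $y$ by elements of $\ker b$, it is cleaner to argue via reflexivity directly inside the free $R$-module $H^*_T(M^T;\ZZ)$. The images of $\iota^*$ and of $b$ are submodules there which agree after localizing at every height-one prime of $R$ (this is precisely what the codimension-two estimate says, since localization is exact). As $\mathrm{im}\,\iota^*\cong H^*_T(M;\ZZ)$ is free and hence reflexive, it coincides with the intersection of its height-one localizations, and that intersection visibly contains $\mathrm{im}\,b$; together with the easy inclusion this gives $\mathrm{im}\,\iota^*=\mathrm{im}\,b=\ker\delta$. This packages your ``freeness makes $H^*_T(M;\ZZ)$ a second syzygy'' remark into a single line and avoids having to analyze $\ker b$ explicitly.
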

This implies that the integral equivariant cohomology of the $T$-action is determined by the one-skeleton: the morphism $H^*_T(M;\ZZ)\to H^*_T(M^T;\ZZ)$ is injective, and its image equals the image of the map $H^*_T(M_1;\ZZ)\to H^*_T(M^T;\ZZ)$ induced by the inclusion $M^T\to M_1$. 

If the action is GKM, then
freeness of the equivariant cohomology is implied by the vanishing of the odd
cohomology groups and the one-skeleton $M_1$ is, as a $T$-space, encoded in the
GKM graph, so Condition \eqref{eq:standingassumption} implies via Lemma \ref{lem:changskjelbred} that $H^*_T(M;\ZZ)$ is fully described by the GKM graph. One defines 
\begin{defn}\label{defn:eqgraphcohom}
The \emph{equivariant graph cohomology} of the GKM graph $\Gamma$ with coefficient ring $A$ is
\begin{align*}
H^*_T(\Gamma;A)&:= \{(f_p)\in \bigoplus_{p\in M^T}H^*(BT;A)\mid f_{q}-f_{r} \equiv 0 \mod\alpha(e) \text{ for all edges }e\text{ between }q \text{ and }r\}\\
&\subset H^*_T(M^T;A).
\end{align*}
where we consider $\alpha(e)$ as an element in the image of $H^2(BT;\ZZ) \to H^2(BT;\ZZ)\otimes A = H^2(BT;A)$ modulo sign. On $H^*_T(\Gamma;A)$ we consider the natural structure of $H^*(BT;A)$-algebra, with componentwise multiplication.
\end{defn}
The same definition was given in \cite{MR1823050} for abstract GKM graphs, keeping in mind the identification $H^2(BT;\ZZ)\cong \ZZ^k$.

As observed in \cite[Theorem 7.2]{MR1489894} (for coefficients in the complex numbers; see \cite[Proposition 2.30]{goertsches2022lowdimensional} for the integer case), the Chang--Skjelbred Lemma becomes
\begin{prop}
For an integer GKM action satisfying \eqref{eq:standingassumption}, the inclusion $M^T\to M$ induces an isomorphism
\[
H^*_T(M;\ZZ)\cong H^*_T(\Gamma;\ZZ).
\]
\end{prop}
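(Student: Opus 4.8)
The plan is to deduce the isomorphism directly from the Chang--Skjelbred Lemma (Lemma \ref{lem:changskjelbred}), so the first task is to verify its hypotheses. The manifold $M$ is closed, smooth, and satisfies \eqref{eq:standingassumption} by assumption, so it remains to check that $H^*_T(M;\ZZ)$ is a free $H^*(BT;\ZZ)$-module. I would obtain this from the Serre spectral sequence of the Borel fibration $M\to M_T\to BT$. Since $T$ is connected, $BT$ is simply connected and the local system is trivial, so $E_2^{p,q}=H^p(BT;\ZZ)\otimes H^q(M;\ZZ)$. Both $H^*(BT;\ZZ)$ and, by the GKM hypothesis $H^{odd}(M;\ZZ)=0$, the cohomology $H^*(M;\ZZ)$ are concentrated in even degrees, so the $E_2$-page is concentrated in even total degree; as every differential $d_r$ shifts total degree by one and hence reverses parity, all differentials vanish and the sequence collapses. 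This yields an isomorphism of $H^*(BT;\ZZ)$-modules $H^*_T(M;\ZZ)\cong H^*(BT;\ZZ)\otimes H^*(M;\ZZ)$, in particular freeness.

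With the hypotheses in place, Lemma \ref{lem:changskjelbred} gives that the restriction $H^*_T(M;\ZZ)\to H^*_T(M^T;\ZZ)$ is injective and that its image is the kernel of the boundary map $H^*_T(M^T;\ZZ)\to H^{*+1}_T(M_1,M^T;\ZZ)$. Since this kernel equals the image of $H^*_T(M_1;\ZZ)\to H^*_T(M^T;\ZZ)$ by exactness of the long exact sequence of the pair $(M_1,M^T)$, the remaining task is purely a computation on the one-skeleton: I must show that this image coincides with the congruence subalgebra $H^*_T(\Gamma;\ZZ)$ of Definition \ref{defn:eqgraphcohom}.

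To carry this out, I would first treat a single invariant $2$-sphere $S$ corresponding to an edge $e$ joining fixed points $q$ and $r$ with weight $\alpha(e)$. Here $T$ acts by rotation fixing the two poles, and a direct computation of $H^*_T(S;\ZZ)$ --- for instance via the Mayer--Vietoris sequence for the two invariant disks, whose Borel constructions are each homotopy equivalent to $BT$ and whose intersection is the circle bundle of the equator --- shows that restriction to the poles is injective with image exactly $\{(f_q,f_r)\mid f_q\equiv f_r \bmod \alpha(e)\}$. To pass from one sphere to the whole of $M_1$, I would exploit that the spheres meet only in fixed points, so by excision the relative term splits as a direct sum $H^{*+1}_T(M_1,M^T;\ZZ)\cong\bigoplus_e H^{*+1}_T(S_e,\{i(e),t(e)\};\ZZ)$ indexed by edges. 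The boundary map therefore decouples edge by edge, and an element $(f_p)_{p\in M^T}$ lies in its kernel precisely when $f_{i(e)}\equiv f_{t(e)} \bmod \alpha(e)$ holds for every edge $e$. This is exactly the defining condition of $H^*_T(\Gamma;\ZZ)$, and since the multiplication on both sides is the componentwise one, the resulting bijection is an isomorphism of $H^*(BT;\ZZ)$-algebras.

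The step I expect to require the most care is the edgewise decomposition of the relative cohomology and the identification of the boundary map: one must check that multiple edges between the same pair of vertices, as well as the combinatorics of the graph, do not introduce additional relations beyond the single-sphere congruences. Because the pairs $(S_e,\{i(e),t(e)\})$ are glued only along the discrete fixed set $M^T$, excision and the splitting of the relative term reduce everything to the single-sphere case already handled, so no cross terms arise and the kernel is described entirely by the independent congruences, one per edge.
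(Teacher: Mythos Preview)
Your proof is correct and follows essentially the same approach as the paper, which does not give a self-contained proof but indicates that the result follows from the Chang--Skjelbred Lemma together with the freeness of $H^*_T(M;\ZZ)$ (itself a consequence of $H^{odd}(M;\ZZ)=0$) and the single-sphere computation, citing \cite{MR1489894} and \cite{goertsches2022lowdimensional} for details. Your argument fleshes out precisely these steps; one small point worth making explicit is that $H^{odd}(M;\ZZ)=0$ together with Poincar\'e duality for the closed orientable even-dimensional $M$ forces $H^*(M;\ZZ)$ to be torsion-free, which is what makes the passage from spectral sequence collapse to the module isomorphism (and hence freeness) go through cleanly.
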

See Section \ref{Sec:GKMForZp} below, in particular Theorem \ref{T: InjectionInZp}, for a relation between $H^*_T(M;\ZZ_p)$ and $H^*_T(\Gamma;\ZZ_p)$.

\begin{rem}\label{rem:cond1}
As the previous proposition is central to many aspects of this paper, so is
Condition \eqref{eq:standingassumption}. In our setup, this condition is in
fact encoded directly in the GKM graph as described by Proposition
\ref{prop:cond1ingraph} below. This was observed earlier in
\cite[Proposition 3.10]{charton2023monotone} in a symplectic setup.
\end{rem}

\begin{prop}\label{prop:cond1ingraph}
An integer GKM manifold satisfies Condition \eqref{eq:standingassumption} if and only if any two adjacent weights are coprime (i.e.\ they are not both divisible by any $1<n\in \mathbb{Z}$).
\end{prop}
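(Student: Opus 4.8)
The plan is to translate the geometric condition \eqref{eq:standingassumption} into a divisibility statement about the weights at the fixed points, using the slice theorem together with Pontryagin duality. Write $\Lambda=H^2(BT;\ZZ)\cong\ZZ^k$ for the character lattice, so that each label is, up to sign, an element of $\Lambda$, and recall the annihilator correspondence between closed subgroups and sublattices: $W\mapsto \Ann(W)=\bigcap_{\alpha\in W}\ker\alpha$, with $\Ann(\Ann(W))=W$ for any sublattice $W$. Near a fixed point $p$ the slice theorem identifies a neighbourhood $T$-equivariantly with the isotropy representation $\bigoplus_i V_{\alpha_i}$, where $\alpha_1,\dots,\alpha_n$ are the weights (labels of the edges at $p$); the isotropy group of a point whose coordinates are nonzero exactly on a set $I$ is then $\bigcap_{i\in I}\ker\alpha_i=\Ann(W_I)$ with $W_I=\langle\alpha_i:i\in I\rangle$, and such a point lies in $M\setminus M_1$ precisely when $\rk W_I\ge 2$.

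The main computational step is a duality lemma: $\Ann(W)$ is contained in a proper subtorus of $T$ if and only if $W$ contains a vector that is primitive in $\Lambda$. Indeed, a primitive $v\in W$ defines a surjective character whose (connected) kernel is a proper subtorus containing $\Ann(W)$; conversely, if $\Ann(W)\subseteq T'$ for a proper subtorus $T'$, pulling back a nontrivial character of $T/T'$ yields a primitive $v\in\Lambda$ vanishing on $\Ann(W)$, hence $v\in\Ann(\Ann(W))=W$. Combining this with elementary lattice theory (via Smith normal form, the content of $W$ — the gcd of all its coordinates — equals the smallest elementary divisor of $W$ inside its saturation), $W$ contains a primitive vector iff its content is $1$, iff $W\not\subseteq p\Lambda$ for every prime $p$. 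Thus $\Ann(W_I)$ fails to lie in a proper subtorus exactly when all $\alpha_i$, $i\in I$, are divisible by a common prime $p$; since distinct weights at a fixed point are linearly independent, this already occurs for some pair of them.

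The reverse implication is then immediate and purely local. If two adjacent weights $\alpha,\beta$ at a vertex $p$ share a prime factor, the point $q\in\bigoplus_i V_{\alpha_i}$ with only the $\alpha$- and $\beta$-coordinates nonzero has $T_q=\ker\alpha\cap\ker\beta$ of dimension $k-2$, so $q\in M\setminus M_1$, and by the lemma $T_q$ lies in no proper subtorus; hence \eqref{eq:standingassumption} fails.

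For the forward implication I argue contrapositively: assume \eqref{eq:standingassumption} fails, so there is $q\in M\setminus M_1$ with $T_q$ not contained in a proper subtorus. Applying the lemma to the annihilator lattice of $T_q$ gives $T_q\supseteq T[p]$ for some prime $p$, so $q\in M^{T[p]}$, while $\dim T\cdot q\ge 2$. I pass to the component $N$ of $M^{T[p]}$ containing $q$, on which $T$ still acts; if $N$ contains a fixed point $p_0$, the slice theorem shows $N$ is locally $\bigoplus_{p\mid\alpha_i}V_{\alpha_i}$, and $\dim T\cdot q\ge 2$ forces at least two weights at $p_0$ to be divisible by $p$ — a non-coprime adjacent pair, as required. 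The crux, and the step I expect to be the main obstacle, is precisely the existence of such a fixed point $p_0$ in the given component: because $T_q$ is necessarily \emph{disconnected} (the condition is vacuous for connected isotropy), this cannot be read off from the well-behaved theory of fixed-point sets of subtori, and one must instead control the fixed-point set of the finite group $T[p]$. I would deduce $N\cap M^T\neq\emptyset$ from $\chi(N)=\chi(N^T)>0$, using equivariant formality of the action ($H^{\mathrm{odd}}(M;\ZZ)=0$) together with a Smith-theoretic argument for the $p$-torus $T[p]$; this is transparent in the symplectic setting of \cite{charton2023monotone} and is the single point that requires genuine care in general.
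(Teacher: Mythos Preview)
Your argument for the easy direction (a pair of non-coprime adjacent weights violates \eqref{eq:standingassumption}) is essentially identical to the paper's.

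For the hard direction you proceed contrapositively, whereas the paper argues directly. Assuming coprimeness of adjacent weights, the paper shows for each prime $p$ that $M^{T_p}\subseteq M_1$: coprimeness forces $M_1^{T_p}$ to consist of $M^T$ together with the spheres of weight divisible by $p$, so $\dim_{\ZZ_p}H^*(M_1^{T_p};\ZZ_p)=|M^T|$; a normal-bundle check shows $M_1^{T_p}$ is a union of components of $M^{T_p}$; and the localization inequality $\dim_{\ZZ_p}H^*(M^{T_p};\ZZ_p)\le|M^T|$ then forces $M^{T_p}=M_1^{T_p}$. Your route instead picks a bad point $q$, passes to the component $N$ of $M^{T[p]}$ through $q$, and needs $N\cap M^T\neq\emptyset$.

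You are right that this last step is the crux, and your instinct that Smith theory plus equivariant formality suffices is correct, but the bare identity $\chi(N)=|N^T|$ does not by itself give $\chi(N)>0$: a priori a component could be a $T$-orbit with $\chi=0$. One way to close the gap is the chain
\[
|M^T|=\sum_N |N^T|\le \sum_N \dim_{\QQ}H^*(N;\QQ)\le \dim_{\ZZ_p}H^*(M^{T_p};\ZZ_p)\le |M^T|,
\]
using Borel localization for the $T$-action on each $N$, universal coefficients, and the $p$-torus localization bound; equality throughout forces $|N^T|=\dim_{\QQ}H^*(N;\QQ)\ge 1$. Note, however, that this is exactly the localization input the paper uses. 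The paper's direct argument is a bit more economical because it spends the coprimeness hypothesis constructively (to compute $\dim H^*(M_1^{T_p})$), while your contrapositive must establish $N^T\neq\emptyset$ without that hypothesis and hence recovers the same inequality chain from the other side. Both approaches are valid; yours has the virtue of making the role of the component $N$ and the local weight picture at a genuine $T$-fixed point very explicit.
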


\begin{proof}
If two weights $\alpha,\beta$ adjacent to $q\in M^T$ are divisible by a prime $p$, then the corresponding $4$-dimensional subspace in $T_q M$ is fixed by the maximal $p$-torus $T_p$. This subgroup is not contained in a proper subtorus hence Condition \eqref{eq:standingassumption} is violated in a neighbourhood of $x$.

Now assume that any two adjacent weights are coprime and fix a prime $p$. Then $M_1^{T_p}$ consists of $M^T$ as well as all invariant $2$-spheres whose weight is divisible by $p$. In particular $\dim_{\mathbb{Z}_p}H^*(M_1^{T_p};\mathbb{Z}_p)=|M^T|$. Let $S$ be an invariant $2$-sphere in $M_1^{T_p}$ and let $NS$ denote its normal bundle. Then by assumption none of the weights in the isotropy representations of $N|_{S^T}$ are divisible by $p$. From this we infer that $(NS)^{T_p}$ is just the zero section. An analogous argument for the isolated fixed points shows that $M_1^{T_p}$ is a union of connected components of $M^{T_p}$. But we have
\[\dim_{\mathbb{Z}_p} H^*(M^{T_p};\mathbb{Z}_p)\leq \dim_{\mathbb{Z}_p} H^*(M;\mathbb{Z}_p)=\dim_\mathbb{Q} H^*(M;\mathbb{Q})=|M^T|\]
where the inequality follows from the localization theorem applied to $T_p$ (see
e.g.\ \cite[Thm.\ 3.10.4]{MR1236839}, the first equality follows from
$H^*(M;\mathbb{Z})$ being torsion free, and the second equality is due to the
fact that the spectral sequence of the Serre spectral sequence of the Borel
fibration of the $T$-action collapses (see e.g.\ \cite[Thm.\ 3.10.4]{MR1236839}. Hence it follows that $M^{T_p}$ contains no connected components besides the ones in $M_1^{T_p}$. Since any subgroup of $T$ which is not contained in a proper subtorus contains a maximal $p$-torus for some prime, this proves that Condition \eqref{eq:standingassumption} holds (to see the last claim, note that any closed subgroup of $T$ is isomorphic to $\mathbb{Z}_{n_1}\times\ldots\times \mathbb{Z}_{n_k}\times T^l$ with $n_i>1$ and $n_i|n_{i+1}$ and being contained in a proper subtorus is equivalent to $k+l<\dim T$).
\end{proof}

Let us recall the notion of an \emph{orientable} GKM graph introduced in
\cite{2210.01856v1}. Given an abstract GKM graph $(\Gamma,\alpha)$ with labels in $\mathbb{Z}^k/\pm 1$ we choose an arbitrary compatible connection and lift $\tilde{\alpha}\colon E(\Gamma)\rightarrow \mathbb{Z}^k$. Now for an edge $e\in E(\Gamma)$ and $e\neq f\in E_{i(e)}$ there is a unique $\epsilon_f\in\{\pm1\}$ such that \[\tilde{\alpha}(f)\equiv \epsilon_f\tilde{\alpha}(\nabla_e f)\mod \alpha(e)\]
We set \[\eta(e)=-\prod_{f\in E_{i(e)}\backslash\{e\}} \epsilon_f.\]
\begin{defn}\label{defn:orientablegkmgraph}
We call the abstract GKM graph $(\Gamma,\alpha)$ \emph{orientable} if for every closed edge path $e_1,\ldots,e_l$ in $\Gamma$ one has $\eta(e_1)\cdot\ldots\cdot\eta(e_l)=1$.
\end{defn}
As shown in \cite{2210.01856v1} this property is independent of the choices of
$\nabla$ and $\tilde{\alpha}$. The GKM graph of a GKM manifold is always
orientable, see \cite[Corollary 2.24]{2210.01856v1}.

\section{A GKM description of $H^*_T(M;\ZZ_p)$}\label{Sec:GKMForZp}

The starting point of our description is given by the following

\begin{lem}\label{lem:inclusionofspheres}
Let $M$ be an integer GKM manifold satisfying Condition \eqref{eq:standingassumption} and let $X_M$ denote the disjoint union of all invariant $2$-spheres of $M$. Then for any coefficient ring $A$ the map $H_T^*(M;A)\rightarrow H^*_T(X_M;A)$ is injective.
\end{lem}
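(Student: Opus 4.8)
The plan is to prove injectivity of $H_T^*(M;A)\to H_T^*(X_M;A)$ by combining the integral Chang--Skjelbred description with a base-change-and-naturality argument. Let me think about what $X_M$ is: it is the disjoint union of all invariant $2$-spheres, i.e. the disjoint disjoint union of the edges, whereas $M_1$ is their union glued along fixed points. The natural map $X_M \to M$ factors through $M_1$, so it suffices to understand $H_T^*(M_1;A)\to H_T^*(X_M;A)$ and $H_T^*(M;A)\to H_T^*(M_1;A)$.

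Let me think about the structure. Each invariant $2$-sphere $S_e$ has $H_T^*(S_e;A) = H_T^*(S_e^T;A) \cap (\text{divisibility condition})$, and in fact $H_T^*(S_e;A)\to H_T^*(S_e^T;A)$ is injective for any coefficients (a $2$-sphere with two fixed points is the most basic GKM situation, and injectivity holds by the localization/Atiyah--Bott-Segal argument or directly). So $X_M$ being a disjoint union, $H_T^*(X_M;A)\to H_T^*(X_M^T;A) = \bigoplus_{e}H_T^*(S_e^T;A)$ is injective. Now $X_M^T = \bigsqcup_e \{i(e), t(e)\}$ maps to $M^T$ by collapsing. So if I can show $H_T^*(M;A)\to H_T^*(M^T;A)$ is injective, I'd be done since $H_T^*(M^T;A)\to H_T^*(X_M^T;A)$ is a diagonal-type injection on the relevant summands... but wait, that injectivity into $H_T^*(M^T;A)$ is exactly what fails for $\ZZ_p$ coefficients per the abstract.

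So the real content is that even though $H_T^*(M;A)\to H_T^*(M^T;A)$ may fail to be injective, the composite $H_T^*(M;A)\to H_T^*(X_M;A)$ remains injective because $X_M$ "remembers more" than $M^T$.

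Here is the approach I'd actually pursue.

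\begin{proof}[Proof sketch]
Since $H^*(M;\ZZ)$ is torsion-free (odd cohomology vanishes and the Serre spectral sequence of the Borel fibration collapses), $H_T^*(M;\ZZ)$ is a free $H^*(BT;\ZZ)$-module, and by the universal coefficient theorem together with collapse of the relevant spectral sequences one has a natural isomorphism $H_T^*(M;A)\cong H_T^*(M;\ZZ)\otimes_{\ZZ} A$ for any coefficient ring $A$; the same holds for each $S_e$ and hence for $X_M$. Thus it suffices to prove injectivity of
\[
H_T^*(M;\ZZ)\otimes A \longrightarrow H_T^*(X_M;\ZZ)\otimes A.
\]
The plan is to reduce to $A=\ZZ$ together with a freeness/splitting statement that survives $\otimes A$. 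First I would invoke Lemma~\ref{lem:changskjelbred}: the integral map $H_T^*(M;\ZZ)\to H_T^*(M^T;\ZZ)$ is injective with image equal to $H_T^*(\Gamma;\ZZ)$, the graph cohomology. Since this map factors as $H_T^*(M;\ZZ)\to H_T^*(M_1;\ZZ)\to H_T^*(M^T;\ZZ)$ and the graph description identifies the image already at the level of $M_1$, the map $H_T^*(M;\ZZ)\to H_T^*(M_1;\ZZ)$ is itself injective. I would then show $H_T^*(M_1;\ZZ)\to H_T^*(X_M;\ZZ)$ is injective by a Mayer--Vietoris argument over the edges: $X_M\to M_1$ is surjective, identifying pairs of fixed points, and the comparison of the two graph-cohomology-type descriptions shows the map is injective on the integral level.

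The crux is then promoting integral injectivity to injectivity after $\otimes A$. For a map of free $H^*(BT;\ZZ)$-modules this is automatic only if the cokernel is again free, so the key step is to establish that
\[
0\to H_T^*(M;\ZZ)\to H_T^*(X_M;\ZZ)\to C\to 0
\]
has \emph{free} cokernel $C$ as an abelian group (equivalently, that $H_T^*(X_M;\ZZ)/H_T^*(M;\ZZ)$ has no torsion). Given freeness, the sequence remains exact after $\otimes A$ by the $\mathrm{Tor}$ long exact sequence, yielding the claim. To prove torsion-freeness of $C$ I would argue that, under Condition~\eqref{eq:standingassumption}, the integral image of $H_T^*(M;\ZZ)$ in $\bigoplus_e H_T^*(S_e;\ZZ)$ is cut out by the congruences $f_{i(e)}\equiv f_{t(e)} \bmod \alpha(e)$, and that each quotient $H_T^*(S_e;\ZZ)\big/\{\text{pairs satisfying the congruence}\}$ is torsion-free precisely because the labels emanating from a vertex are the primitive weights. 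This is exactly where the standing assumption on coprimality of adjacent weights (Proposition~\ref{prop:cond1ingraph}) enters, ensuring no divisibility defect creates torsion in the cokernel.

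The main obstacle I anticipate is precisely this torsion-freeness of the cokernel: without it, integral injectivity does not pass to arbitrary coefficients, and indeed the whole point of the paper is that the analogous statement into $H_T^*(M^T;A)$ \emph{fails}. The reason it nonetheless succeeds for $X_M$ rather than $M^T$ is that $X_M$ records each weight $\alpha(e)$ together with its multiplicity structure, so the relevant quotients are the ``good'' torsion-free ones $H^*(BT;\ZZ)/\alpha(e)$ rather than quotients by the full ideal generated by all weights at a vertex. I would therefore spend most of the effort verifying, edge by edge, that the integral cokernel is free, after which the base-change argument is formal.
\end{proof}
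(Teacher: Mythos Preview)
Your approach is sound and genuinely different from the paper's. The paper works directly with coefficients $A$: it uses the localization theorem to show $H_T^*(M;A)\to H_T^*(M_1;A)$ is injective (here Condition~\eqref{eq:standingassumption} guarantees $M^S\subset M_1$ for the multiplicative set $S=H^+(BT;A)$), and then a Mayer--Vietoris argument on $M_1$ (thickening vertices to trees) to show $H_T^{2*}(M_1;A)\to H_T^{2*}(X_M;A)$ is injective. Your route---reduce to $\ZZ$ via universal coefficients and prove the integral cokernel is torsion-free---also works, and has the virtue of making the algebra explicit; the paper's argument is more self-contained topologically and avoids invoking the full Chang--Skjelbred identification $H_T^*(M;\ZZ)\cong H_T^*(\Gamma;\ZZ)$.

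However, your explanation of the torsion-freeness step is off. You attribute it to primitivity/coprimality of adjacent weights, but in fact it is elementary and needs no such hypothesis: if $n\cdot((g_e,h_e))_e$ equals the image of some $(f_v)\in H_T^*(\Gamma;\ZZ)$, then every $f_v$ is divisible by $n$ (since $f_{i(e)}=ng_e$ for any edge at $v$), and $(f_v/n)$ lies in $H_T^*(\Gamma;\ZZ)$ because $f_{i(e)}/n-f_{t(e)}/n=g_e-h_e\equiv 0\bmod\alpha(e)$ already from $(g_e,h_e)\in H_T^*(S_e;\ZZ)$. Also, your description of the integral image as ``cut out by the congruences $f_{i(e)}\equiv f_{t(e)}\bmod\alpha(e)$'' is not right---those congruences describe all of $H_T^*(X_M;\ZZ)$; the image is the subset where the entries are consistent at shared vertices. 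Condition~\eqref{eq:standingassumption} enters your argument only once, through the integral Chang--Skjelbred Lemma giving $H_T^*(M;\ZZ)\cong H_T^*(\Gamma;\ZZ)$, not in the cokernel computation.
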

 
 \begin{proof}
 Let $S=H^+(BT;A)$. We set $M^S$ to be the collection of all points $q\in M$ for
 which no element of $S$ gets annihilated under the map $H^*(BT;A)\rightarrow
 H^*(BT_q;A)$ where $T_q$ is the stabilizer of $x$. Let $U\subset T$ be a proper
 subtorus. Then the induced map $H^2(BT;A)\rightarrow H^2(BU;A)$ has nontrivial
 kernel as up to isomorphism it is a projection $A^{\dim(T)}\rightarrow
 A^{\dim(U)}$. In particular some element of $S$ gets annihilated by this map.
 Hence, by the assumption on the isotropies, the same holds for
 $H^\ast(BT;A)\rightarrow H^\ast(BT_q;A)$ for any point $q\notin M_1$.
 Consequently $M^S\subset M_1$. As $H_T^\ast(M;A)$ is free over $H^\ast(BT;A)$
 the localization theorem (see e.g.\ \cite[Thm.\ 3.2.6]{MR1236839} implies the injectivity of $H_T^\ast(M;A)\rightarrow H_T^\ast(M^S;A)$ and hence also
 \[H_T^\ast(M;A)\rightarrow H_T^\ast(M_1;A)\]
 is injective. Since $H_T^\ast(M)$ is concentrated in even degrees the claim of the lemma will follow from the fact that
 \[H_T^\ast(M_1;A)\rightarrow H_T^\ast(X_M;A)\]
 is injective in even degrees. To see this thicken the vertices of $M_1$ to starlike trees on which $T$ acts trivially to obtain an equivariantly homotopy equivalent space $Y$. Now cover $Y=V\cup W$ where $V$ is a small neighbourhood of $X_M\subset Y$ and $W$ is the interior of the inserted trees. The corresponding  Mayer-Vietoris sequence then reads
 \[\ldots \rightarrow H_T^\ast(M_1;A)\rightarrow H_T^\ast(X_M;A)\oplus H^\ast_T(W;A)\rightarrow H^\ast_T(V\cap W;A)\rightarrow\ldots\]
 We note that $V\cap W$ is a disjoint union of intervals fixed by $T$. Hence the map $H_T^\ast(W;A)\rightarrow H_T^\ast(V\cap W;A)$ is injective as well as concentrated in even degrees. This proves the desired injectivity of $H_T^{2\ast}(M_1;A)\rightarrow H_T^{2\ast}(X_M;A)$.
 \end{proof}

In order to achieve a combinatorial description of the equivariant cohomology we recall the equivariant cohomology of $2$-spheres.
Let $S^2_\alpha$ be $S^2$, equipped with the $T$-action with weight $\alpha \in
H^{2}(BT;\ZZ)$. We assume that the weight is nontrivial; then the action is (potentially noneffective) integer GKM, and its integer equivariant cohomology is, via restriction to the fixed point set, given as
\begin{equation}\label{eq:S2CS}
H^*_T(S^2_\alpha;\ZZ) = \{(f,g)\in H^*(BT;\ZZ)^2\mid f-g\equiv 0 \mod \alpha\}.
\end{equation}
It is a free $H^*(BT;\ZZ)$-module of rank $2$; one choice of generators are $1$ and $(\alpha,0)$ (or $(0,\alpha)$).

Let $p$ be an arbitrary prime. By the universal coefficient theorem, 
\[
H^*_T(S^2_\alpha;\ZZ_p) = H^*_T(S^2_\alpha;\ZZ)\otimes \ZZ_p,
\]
which is a free $H^*(BT;\ZZ_p)$-module of rank $2$.
\begin{lem}\label{L:EquivariantCohmologySalpha}
We denote $\xi:=(\alpha,0)$ as an element of $H^2_T(S^2_\alpha;\ZZ)$ under the identification \eqref{eq:S2CS}. With the same letter we denote its reduction to $\ZZ_p$ coefficients.
\begin{enumerate}
\item As an $H^*(BT; \ZZ)$-algebra,
\[
H^*_T(S^2_\alpha;\ZZ) = H^*(BT;\ZZ)[\xi]/(\xi^2-\alpha \xi).
\]
\item As an $H^*(BT;\ZZ_p)$-algebra,
\[
H^*_T(S^2_\alpha;\ZZ_p) = H^*(BT;\ZZ_p)[\xi]/(\xi^2-\alpha\xi).
\]
In particular, if $\alpha$ is divisible by $p$, then
\[
H^*_T(S^2_\alpha;\ZZ_p) = H^*(BT;\ZZ_p)[\xi]/(\xi^2).
\]
\item If $\alpha$ is not divisible by $p$, then the inclusion of the fixed point set induces an injection
\[
H^*_T(S^2_\alpha;\ZZ_p)\to H^*(BT;\ZZ_p)\oplus H^*(BT;\ZZ_p).
\]
with image $\{(f,g)\mid f-g \equiv 0 \mod r(\alpha)\}$, where $r\colon H^*(BT;\mathbb{Z})\rightarrow H^*(BT;\mathbb{Z}_p)$ reduces coefficients.
\item If $\alpha$ is divisible by $p$, then the map 
\begin{align*}
\{(f,g)\in H^*(BT;\ZZ)^2\mid &\,\, f-g\equiv 0 \mod \alpha\}\\
&=H^*_T(S^2_\alpha;\ZZ)\to H^*_T(S^2_\alpha;\ZZ_p) = H^*(BT;\ZZ_p)[\xi]/(\xi^2)
\end{align*} given by reducing coefficients from $\ZZ$ to $\ZZ_p$ is the map
\[
(f,g)\mapsto r(g) + r\left(\frac{f-g}{\alpha}\right)\xi.
\]
The inclusion of the fixed point set
\[
H^*_T(S^2_\alpha;\ZZ_p)\to H^*(BT;\ZZ_p)\oplus H^*(BT;\ZZ_p).
\]
sends $1\in H^*_T(S^2_\alpha;\ZZ_p)=H^*(BT;\ZZ_p)[\xi]/(\xi^2)$ to $(1,1)$ and has kernel generated by $\xi$.
\end{enumerate}
\end{lem}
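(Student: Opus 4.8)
The entire lemma flows from the integral identification \eqref{eq:S2CS} together with the free module basis $\{1,\xi\}$ of $H^*_T(S^2_\alpha;\ZZ)$ over $H^*(BT;\ZZ)$, where $1=(1,1)$ and $\xi=(\alpha,0)$. For part (i) the plan is simply to compute $\xi^2$ by componentwise multiplication inside $H^*(BT;\ZZ)^2$: one finds $\xi^2=(\alpha^2,0)=\alpha\,(\alpha,0)=\alpha\xi$, so the relation $\xi^2-\alpha\xi=0$ holds. Since $\xi^2-\alpha\xi$ is monic of degree $2$ in $\xi$, the quotient $H^*(BT;\ZZ)[\xi]/(\xi^2-\alpha\xi)$ is itself free of rank $2$ on $\{1,\xi\}$, so the evident algebra map $\xi\mapsto(\alpha,0)$ is an isomorphism. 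Part (ii) then follows by tensoring with $\ZZ_p$: since $H^*(BT;\ZZ)$ is torsion free the identification $H^*_T(S^2_\alpha;\ZZ_p)=H^*_T(S^2_\alpha;\ZZ)\otimes\ZZ_p$ preserves the ring structure and reduces the defining relation to $\xi^2-r(\alpha)\xi$, which becomes $\xi^2$ precisely when $p\mid\alpha$.

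For part (iii) I assume $p\nmid\alpha$, so that $r(\alpha)\neq 0$ is a nonzerodivisor in the integral domain $H^*(BT;\ZZ_p)=\ZZ_p[x_1,\dots,x_k]$. The fixed point restriction is natural, sends the unit $1$ to $(1,1)$, and sends the reduced class $\xi$ to the reduction $(r(\alpha),0)$ of $(\alpha,0)$. Writing a general class as $f+g\xi$ with $f,g\in H^*(BT;\ZZ_p)$, its restriction is $(f+g\,r(\alpha),\,f)$. Injectivity is then immediate: if this vanishes then $f=0$ and $g\,r(\alpha)=0$, whence $g=0$ because $r(\alpha)$ is a nonzerodivisor. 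For the image, the difference of the two components equals $g\,r(\alpha)$ and is thus divisible by $r(\alpha)$; conversely any pair $(a,b)$ with $a-b\equiv 0\bmod r(\alpha)$ arises by taking $f=b$ and $g=(a-b)/r(\alpha)$. This identifies the image with $\{(f,g)\mid f-g\equiv 0\bmod r(\alpha)\}$.

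The genuinely new behaviour occurs in part (iv), where $p\mid\alpha$ and the fixed point restriction ceases to be injective; the main point is to record the explicit decomposition that replaces it. The key algebraic identity is $(f,g)=g\cdot(1,1)+\tfrac{f-g}{\alpha}\cdot(\alpha,0)$ inside $H^*(BT;\ZZ)^2$, where $\tfrac{f-g}{\alpha}$ is a well-defined integral class because $\alpha\mid(f-g)$ and $H^*(BT;\ZZ)$ is a polynomial ring. Reducing mod $p$ term by term sends $1\mapsto 1$ and $\xi\mapsto\xi$, yielding the stated formula $(f,g)\mapsto r(g)+r\!\left(\tfrac{f-g}{\alpha}\right)\xi$. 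For the fixed point map, the reduced class $\xi$ restricts to $(r(\alpha),0)=(0,0)$ since $p\mid\alpha$, so a class $u+v\xi$ restricts to $(u,u)$; this vanishes exactly when $u=0$, identifying the kernel with $H^*(BT;\ZZ_p)\xi$, the ideal generated by $\xi$. I expect the only real subtlety to lie in part (iv): one must carefully keep track of the interplay between the integral identification \eqref{eq:S2CS}, the $\ZZ_p$-reduction, and the restriction map, precisely because over $\ZZ_p$ the fixed point data no longer determines the $\xi$-coefficient and one is forced to perform the division $\tfrac{f-g}{\alpha}$ integrally before reducing.
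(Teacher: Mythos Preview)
Your proof is correct and follows the same overall structure as the paper's: use the free basis $\{1,\xi\}$ and the componentwise computation $\xi^2=\alpha\xi$ for (i), reduce coefficients for (ii), and decompose $(f,g)=g\cdot 1+\tfrac{f-g}{\alpha}\cdot\xi$ for (iv).

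The only place where your argument differs from the paper's is part (iii). The paper establishes injectivity and surjectivity by lifting to integer coefficients and invoking coprimality of $\alpha$ and $p$: a kernel element lifts to an integral class whose fixed-point image is divisible by $p$, and coprimality forces the integral class itself to be divisible by $p$. You instead work entirely over $\ZZ_p$, using that $r(\alpha)$ is a nonzerodivisor in the integral domain $H^*(BT;\ZZ_p)$ to read off both injectivity and the image directly from the formula $f+g\xi\mapsto (f+g\,r(\alpha),f)$. Your route is a bit more self-contained; the paper's lifting argument has the mild conceptual advantage of exhibiting the $\ZZ_p$-description explicitly as the reduction of the integral GKM description, which is the perspective exploited later in Theorem~\ref{T: InjectionInZp}.
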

\begin{proof}
For part (i), note that $1$, together with $\xi=(\alpha,0)$, consitutes an integer basis of $H^*_T(S^2_\alpha;\ZZ)$, and that $\xi$ satisfies $\xi^2=\alpha\xi$. Part (ii) follows immediately by reducing coefficients to $\ZZ_p$. 
Part $(iii)$ follows from the integral GKM description $H^*_T(S^2_\alpha;\ZZ)\cong \{(f,g)\in H^*(BT;\ZZ)^2\mid f-g\equiv 0 \mod \alpha\}$ by reducing coefficients to $\mathbb{Z}_p$. Indeed, by naturality the restriction $H^*_T(S^2_\alpha;\ZZ_p)\to H^*(BT;\ZZ_p)\oplus H^*(BT;\ZZ_p)$ takes values in the desired subalgebra and for any $(f,g)$ in said subalgebra we find lifts of $f,g$ to $H^*(BT;\mathbb{Z})$ that agree modulo $\alpha$. To see injectivity, let $x\in H_T^*(S^2_\alpha;\mathbb{Z}_p)$ lie in the kernel and choose a lift $y\in H_T^*(S^2_\alpha;\mathbb{Z})$. Then $y$ maps to $(f,g)\in H^*(BT;\ZZ)\oplus H^*(BT;\ZZ)$ where $f,g\equiv 0\mod p$. But since $\alpha$, $p$ are coprime also $\frac{f}{p}\equiv \frac{g}{p}\mod \alpha$ and thus $y$ is divisible by $p$ in $H_T^*(M;\mathbb{Z})$. Hence $x=0$.
The first statement in part (iv) follows from reducing coefficients in the expression $(f,g) = (g,g) + (f-g,0)$, and the second one follows directly from our choice of generator $\xi$.
\end{proof}

\begin{rem}\label{rem:signs}
In case $p=2$ and $\alpha$ is divisible by $2$, the generator $\xi$ is (in
$\ZZ_2$ coefficients) uniquely determined by the condition $\xi^2=0$. In fact,
an element of the form $\xi+f$ squares to $(\xi+f)^2 = \xi^2+2f\xi + f^2 = f^2$
which does not vanish if $f\neq 0$. Note that in this case, choosing as second
generator the element $(0,\alpha)$ would give the same element with $\ZZ_2$
coefficients, which is not true for $p>2$. Hence in the latter case the specific
maps of Lemma \ref{L:EquivariantCohmologySalpha} are not canonical.
\end{rem}

In the following, we wish to derive a more explicit description of the
equivariant cohomology $H^*_T(M;\ZZ_p)$ in terms of the GKM graph $\Gamma$. As
in the computation of the equivariant cohomology of the $2$-spheres in Lemma
\ref{L:EquivariantCohmologySalpha} we distinguished one of the two fixed points
via our choice of generator $\xi$, we need to choose an auxiliary orientation of
each edge $e$ of the graph, so that its initial vertex $i(e)$ and its terminal
vertex $t(e)$ are well-defined. This orientation does not need to satisfy any
additional assumptions. Moreover we fix a sign for $\alpha(e)$ for each edge
$e$.

Let $X_{M}$ denote the disjoint union of the invariant $2$-spheres in $M$ as in Lemma \ref{lem:inclusionofspheres}.
Then clearly
\[
	H_{T}^{\ast}(X_{M};\ZZ_{p}) = \bigoplus_{e} H^{\ast}_{T}(S_{\alpha(e)}^{2};
	\ZZ_{p})
\]
where the sum runs over all edges $e$ in the GKM graph of $M$. Using Lemma
\ref{L:EquivariantCohmologySalpha} the sum can be embedded into
\begin{equation}\label{eq:X_MEmbedded}
	\bigoplus_{e \in E(\Gamma)} (H^{\ast}(BT;\ZZ_{p}))^{2} \oplus
	\bigoplus_{e \in E(\Gamma,p)}  H^{\ast}(BT;\ZZ_{p}) \cdot \xi_{e},
\end{equation}
where $E(\Gamma)$ denotes the edges of $\Gamma$,  $E(\Gamma,p) = \{e \in E(\Gamma) \mid 
\alpha(e)\equiv 0\mod p\}$ and $\xi_{e}$ is the $\mod p$ reduction of the element $(\alpha(e),0)\in H^*_T(S^2_{\alpha(e)};\ZZ)$ (where the first entry corresponds to $i(e)$, the second to $t(e)$), so that
\[
	H^{\ast}_{T}(S_{\alpha(e)}^{2}; \ZZ_{p}) =
	H^{\ast}(BT;\ZZ_{p})[\xi_{e}]/(\xi_{e}^{2} -
	\alpha(e)\xi_{e}).
\]
Recall the definition of equivariant graph cohomology, Definition \ref{defn:eqgraphcohom}, and consider 
\[
	H^{\ast}_{T}(\Gamma;\ZZ_{p}) \oplus B^\ast(\Gamma,p) 
\]
with $B^\ast(\Gamma,p)  = \bigoplus_{e \in E(\Gamma,p)}H^{\ast-2}(BT;\ZZ_{p})$, as 
an $H^{\ast}(BT;\ZZ_{p})$-algebra, where multiplication is defined as follows:
For $f= (f_{q}), f'=(f'_{q}) \in H^{\ast}_{T}(\Gamma;\ZZ_{p})$ and
$g=(g_{e}), g'=(g'_{e}) \in B^\ast(\Gamma,p)$ define
\[
	(f,g)(f',g') = (ff', fg' + f'g) 
\]
where $((ff')_{q}) = (f_{q}f'_{q})$ and $fg' = (f_{i(e)}g'_{e})$ as well as $f'g
= (f'_{i(e)}g_{e})$ for $e \in E(\Gamma,p)$. Note that for any such edge $e$ we have $f_{i(e)} =
f_{t(e)}$. Finally, there is an embedding
$\eta \colon H_{T}^{\ast}(\Gamma;\ZZ_{p}) \oplus B^\ast(\Gamma,p) \to H^{\ast}_{T}(X_{M}; \ZZ_{p})$ defined by
\[
	((f_{q}), (g_{e}))	\mapsto \left((f_{i(e)} , f_{t(e)})_{e \in E(\Gamma)},
	(g_{e} \xi_{e})_{e \in E(\Gamma,p)}\right).
\]
 We remark that the above map is well-defined and a homomorphism of
 $H^*_T(BT;\ZZ_p)$-algebras due to Lemma \ref{L:EquivariantCohmologySalpha} (ii)
 and (iii). 

\begin{thm}\label{T: InjectionInZp}
The map $i^{\ast} \colon H_{T}^{\ast}(M; \ZZ_{p})
\to H_{T}^{\ast}(X_{M};\ZZ_{p})$ induced by the map $i \colon X_{M} \to M$ factorizes
through a $H^*(BT;\ZZ_p)$-algebra morphism $\Phi \colon H_{T}^{\ast}(M;\ZZ_{p}) \to
H_{T}^{\ast}(\Gamma;\ZZ_{p}) \oplus B^\ast(\Gamma,p)$.  Moreover the following diagram commutes
\[
\begin{tikzcd}
	H_{T}^{\ast}(M;\ZZ) \arrow{d}{} \arrow{r}{}
	& H_{T}^{\ast}(\Gamma;\ZZ) \arrow{d}{\Psi} &  \\
	H^{\ast}_{T}(M; \ZZ_{p})\arrow{r}{\Phi} \arrow[bend right=30]{rr}{i^{\ast}}
	& H^{\ast}_{T}(\Gamma; \ZZ_{p}) \oplus B^\ast(\Gamma,p)  \arrow{r}{\eta} & H_{T}^{\ast}(X_{M};
	\ZZ_{p})
\end{tikzcd}
\]
where the vertical map on the left is the (surjective) reduction modulo p,
whereas the vertical map on the right is defined as
\[
	\Psi \colon H^{\ast}_{T}(\Gamma,\ZZ) \to H^{\ast}_{T}(\Gamma, \ZZ_{p}) \oplus
	B^\ast(\Gamma,p), \quad 
	\Psi((f_{q})) = \left((r(f_{q})), \left(r\left(\frac{f_{i(e)} -
	f_{t(e)}}{\alpha(e)}\right)\right)_{e \in E(\Gamma,p)} \right).
\]
If Condition \eqref{eq:standingassumption} is valid, then $\Phi$ is injective.
\end{thm}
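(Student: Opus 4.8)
The plan is to construct $\Phi$ as the unique factorization of $i^*$ through $\eta$, exploiting that the reduction map $r_M\colon H^*_T(M;\ZZ)\to H^*_T(M;\ZZ_p)$ on the left is surjective and that $\eta$ is injective. Surjectivity of $r_M$ holds because $H^*_T(M;\ZZ)$ is a free $H^*(BT;\ZZ)$-module concentrated in even degrees, hence torsion-free as an abelian group, so the universal coefficient theorem gives $H^*_T(M;\ZZ_p)=H^*_T(M;\ZZ)\otimes\ZZ_p$ with no Tor contribution. Granting the commutativity of the outer square, i.e.\ $i^*\circ r_M = \eta\circ\Psi\circ\phi_{\ZZ}$ where $\phi_\ZZ\colon H^*_T(M;\ZZ)\xrightarrow{\cong} H^*_T(\Gamma;\ZZ)$ is the integral GKM isomorphism, one defines $\Phi$ by $\Phi(r_M(x)) := \Psi(\phi_\ZZ(x))$. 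This is well defined: if $r_M(x)=r_M(x')$ then $\eta\Psi\phi_\ZZ(x)=i^*r_M(x)=i^*r_M(x')=\eta\Psi\phi_\ZZ(x')$, and injectivity of $\eta$ forces $\Psi\phi_\ZZ(x)=\Psi\phi_\ZZ(x')$. By construction $\eta\circ\Phi = i^*$ on the image of $r_M$, which is all of $H^*_T(M;\ZZ_p)$, giving the bottom triangle; the left square commutes by definition.

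The computational heart is the commutativity of the outer square, which I would check one edge at a time after composing with the embedding \eqref{eq:X_MEmbedded}. Fix an integral class and write $\phi_\ZZ(x)=(f_q)\in H^*_T(\Gamma;\ZZ)$. Its restriction to the invariant sphere $S^2_{\alpha(e)}$ over an edge $e$ is, in the integral GKM description \eqref{eq:S2CS}, the pair $(f_{i(e)},f_{t(e)})$. Reducing this pair modulo $p$ and comparing with $\eta(\Psi((f_q)))$ splits into two cases. If $p\nmid\alpha(e)$, Lemma \ref{L:EquivariantCohmologySalpha}(iii) identifies the reduction with $(r(f_{i(e)}),r(f_{t(e)}))$, which is exactly the edge-$e$ component of $\eta\Psi$; there is no $B^*$-contribution and nothing further to check. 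If $p\mid\alpha(e)$, Lemma \ref{L:EquivariantCohmologySalpha}(iv) computes the reduction of $(f_{i(e)},f_{t(e)})$ as $r(f_{t(e)})+r\!\left(\frac{f_{i(e)}-f_{t(e)}}{\alpha(e)}\right)\xi_e$. Under the fixed-point restriction $1\mapsto(1,1)$, $\xi_e\mapsto(0,0)$, so this lands in the pair $(r(f_{t(e)}),r(f_{t(e)}))=(r(f_{i(e)}),r(f_{t(e)}))$, the equality holding because $p\mid\alpha(e)$ and $f_{i(e)}-f_{t(e)}\equiv 0\bmod\alpha(e)$ force $r(f_{i(e)})=r(f_{t(e)})$; its $\xi_e$-coefficient is $r\!\left(\frac{f_{i(e)}-f_{t(e)}}{\alpha(e)}\right)$. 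Both of these agree with the corresponding components of $\eta(\Psi((f_q)))$ by the very definition of $\Psi$ and $\eta$.

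It remains to observe that $\Phi$ is a morphism of $H^*(BT;\ZZ_p)$-algebras; since $r_M$ is a surjective algebra morphism this reduces to checking that $\Psi$ is one. Additivity and $H^*(BT;\ZZ_p)$-linearity are immediate, and multiplicativity on the $B^*$-component rests on the identity
\[
\frac{f_{i}f'_{i}-f_{t}f'_{t}}{\alpha}=f_{i}\cdot\frac{f'_{i}-f'_{t}}{\alpha}+f'_{t}\cdot\frac{f_{i}-f_{t}}{\alpha},
\]
valid in $H^*(BT;\ZZ)$ for any edge $e\in E(\Gamma,p)$ with $\alpha=\alpha(e)$; reducing modulo $p$ and using $r(f'_{i(e)})=r(f'_{t(e)})$ (again because $p\mid\alpha(e)$) yields precisely the twisted product $fg'+f'g$ prescribed on $H^*_T(\Gamma;\ZZ_p)\oplus B^*(\Gamma,p)$. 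Finally, the asserted injectivity of $\Phi$ under Condition \eqref{eq:standingassumption} is now immediate: by Lemma \ref{lem:inclusionofspheres} the map $i^*$ is injective, and since $i^*=\eta\circ\Phi$ a map with injective composite is itself injective. I expect the main obstacle to be precisely the bookkeeping in the second case above --- correctly reading off the reduction via Lemma \ref{L:EquivariantCohmologySalpha}(iv) in the presence of the fixed orientations and sign choices for the $\alpha(e)$ (cf.\ Remark \ref{rem:signs}) and matching the $\xi_e$-coefficient against the $B^*(\Gamma,p)$-slot of $\Psi$ --- rather than any conceptual difficulty.
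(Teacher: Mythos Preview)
Your proof is correct but takes a different route from the paper. The paper constructs $\Phi$ intrinsically at the $\ZZ_p$ level: for $x\in H^*_T(M;\ZZ_p)$ it observes that $\omega:=i^*(x)-\eta(j\circ i^*(x),0)$ lies in the kernel of the restriction $j\colon H^*_T(X_M;\ZZ_p)\to H^*_T(M^T;\ZZ_p)$, and by Lemma~\ref{L:EquivariantCohmologySalpha}(iii),(iv) this kernel is exactly $\bigoplus_{e\in E(\Gamma,p)}H^*(BT;\ZZ_p)\cdot\xi_e$; reading off the coefficients $g_e$ then gives $\Phi(x)=(j\circ i^*(x),(g_e))$ directly, and the commutativity of the square is checked afterwards via Lemma~\ref{L:EquivariantCohmologySalpha}(iv). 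You instead first verify the outer square commutes and then define $\Phi$ by descent along the surjection $r_M$, with well-definedness coming from injectivity of $\eta$. Both yield the same $\Phi$, but the paper's construction produces an explicit formula for $\Phi(x)$ from $x$ itself without choosing an integral lift, which is closer in spirit to the paper's stated aim of a $\ZZ_p$-intrinsic description. Your route is arguably slicker once the integral GKM picture is in hand, since a single edge-by-edge computation simultaneously establishes commutativity and well-definedness, and the algebra-morphism property of $\Phi$ reduces cleanly to that of $\Psi$.

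One small point: you invoke $\phi_\ZZ$ as an isomorphism, but the theorem asserts the factorization and commutativity \emph{without} assuming Condition~\eqref{eq:standingassumption}; that hypothesis enters only for the final injectivity claim. Your argument in fact only uses $\phi_\ZZ$ as a map (its image lands in $H^*_T(\Gamma;\ZZ)$ by~\eqref{eq:S2CS} applied to each invariant sphere), so this is purely terminological and not a gap.
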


Recall that above we chose signs and orientations of the edges. These enter in
the definition of $\Psi$ in the case $p \neq 2$, see Remark \ref{rem:signs}.

\begin{proof}
The inclusion of the fixed point set $M^{T} \to M$ factors through $X_{M}$, thus
the induced map factorizes as $H_{T}^{\ast}(M; \ZZ_{p}) \stackrel{i^{\ast}}{\longrightarrow}
H_{T}^{\ast}(X_{M};\ZZ_{p}) \stackrel{j}{\longrightarrow}
H^{\ast}_{T}(M^{T};\ZZ_{p})$.
Now consider the commutative diagram
\[
	\begin{tikzcd}
	 & H_{T}^{\ast}(\Gamma; \ZZ_{p}) \oplus B^\ast(\Gamma,p) \arrow{d}{\eta}\arrow{rd}{\pi} & \\
		H_{T}^{\ast}(M;\ZZ_{p}) \arrow{r}{i^{\ast}} & H_{T}^{\ast}(X_{M}; \ZZ_{p})
		\arrow{r}{j} &
		H_{T}^{\ast}(M^{T};\ZZ_{p}),\\
	\end{tikzcd} 
\]
where $\eta$ is the embedding from above. From the definitions of all involved
maps, the map $\pi$ is just the projection to the first component. For $x \in
H^{\ast}_{T}(M; \ZZ_{p})$ we have that
\[
	\omega := i^{\ast}(x) - \eta(j \circ i^{\ast}(x), 0) \in \ker j.
\]
From Lemma \ref{L:EquivariantCohmologySalpha} (iii) it follows that $\omega$
lies in $\bigoplus_{e \in E(\Gamma,p)}H_{T}^{\ast}(S^{2}_{\alpha(e)};\ZZ_{p})$ and by
Lemma \ref{L:EquivariantCohmologySalpha} (iv) $\omega$ restricted to each
$H_{T}^{\ast}(S^{2}_{\alpha(e)}; \ZZ_{p})$ ($e \in E(\Gamma,p)$) must be a multiple of
$\xi_{e}$, say $g_{e}\xi_{e}$, for $g_{e} \in H^{\ast-2}(BT;\ZZ_{p})$. Therefore $\omega
= \eta(0, (g_{e})_{e \in E(\Gamma,p)})$ and defining
\[
	\Phi(x) := (j \circ i^{\ast}(x), (g_{e})_{e \in E(\Gamma,p)}) 
\]
proves the first statement of the theorem. We see that from Lemma
\ref{L:EquivariantCohmologySalpha} (iv) and the definition of $\Psi$ that the
square in the diagram of the theorem commutes. Condition \eqref{eq:standingassumption} implies that $i^{\ast}$ is injective by Lemma \ref{lem:inclusionofspheres}, hence in this case also $\Phi$ is injective.
\end{proof}

\begin{rem}
We regard this statement as a GKM description of the equivariant cohomology of $M$ with coefficients $\ZZ_p$  because it describes $H^*_T(M;\ZZ_p)$ purely in terms of the GKM graph $\Gamma$. In fact, the theorem says that it is isomorphic, as an $H^*(BT;\ZZ_p)$-algebra, to the image of the map $\Psi:H^*_T(\Gamma;\ZZ)\to H^*_T(\Gamma;\ZZ_p)\oplus B^\ast(\Gamma,p)$. 

Note that the map $\Psi$ is not necessarily surjective, hence
$\Phi:H^*_T(M;\ZZ_p)\to H^*_T(\Gamma;\ZZ_p)\oplus B^\ast(\Gamma,p)$ not
necessarily an isomorphism. In fact even in case all weights are primitive (and
hence there is no $B^\ast(\Gamma,p)$ summand) the map
$H^*_T(M;\mathbb{Z}_p)\rightarrow H^*_T(\Gamma;\mathbb{Z}_p)$ is not necessarily
surjective. This happens e.g.\ for the $T^2$-action on $S^2\times S^2\times S^2$
with weights $(1,0),(0,1),(1,p)$: a computation shows that in this case
$\dim_{\mathbb{Z}_p} H^2_T(\Gamma;\mathbb{Z}_p)=6$ while
$H^2_T(\Gamma;\mathbb{Z})$ is a free group of rank $5$. 
\end{rem}

\begin{rem}
Let $T_p\subset T$ be the maximal $p$-torus. The injection $\Phi$ in Theorem \ref{T: InjectionInZp} is very much related to the composition
\[H_T^*(M;\mathbb{Z}_p)\rightarrow H_{T_p}^*(M;\mathbb{Z}_p)\rightarrow H_{T_p}^*(M^{T_p};\mathbb{Z}_p)\]
 where the first map is an isomorphism onto the even degree part of the middle algebra and the second map is an injection due to the localization theorem and the fact that the Serre spectral sequence of the Borel fibration of the $T_p$-action collapses (this latter fact is inherited from the analogous property of the $T$-action). From the assumptions on $M$ it follows that $M^{T_p}$ consists exactly of the spheres with weight divisible by $p$ as well as the remaining $T$-fixed points. Hence one naturally has $H^*_T(\Gamma;\mathbb{Z}_p)\oplus B^\ast(\Gamma,p)\subset H_{T_p}^*(M^{T_p};\mathbb{Z}_p)$ and the map $H_{T}^*(M;\mathbb{Z}_p)\rightarrow H_{T_p}^*(M^{T_p};\mathbb{Z}_p)$ factors through $\Phi$.

The image of $\Phi$ agrees with the image of
$H^*_{T_p}(M_{1,p};\mathbb{Z}_p)\rightarrow H_{T_p}^*(M^{T_p};\mathbb{Z}_p)$ by
the Chang-Skjelbred Lemma for finite tori (see \cite{MR4223067},
or \cite[Theorem 4.1]{MR2060474} for the case $p=2$) where $M_{1,p}=\{x\in M~|~
p\geq |T_p\cdot x|\}$. However the space $M_{1,p}$ will be larger than $M_1$ in
case the weights are not pairwise linearly independent when reduced to $\mathbb{Z}_p$
coefficients. In particular $M_{1,p}$ is not directly encoded in the GKM graph
(although much of its combinatorics are). This is the reason why our
combinatorial description of the image of $\Phi$ is not intrinsic to
$\mathbb{Z}_p$-coefficients but rather via reduction from the description
obtained by the integral Chang-Skjelbred Lemma.

Under additional conditions on the weights, combinatorial descriptions which are
closer to the classical GKM description can be derived. For example, one may consider so-called $\mod 2$ GKM manifolds \cite{MR2060474}, which are GKM manifolds such that the weights at any fixed point reduced modulo $2$ are non-zero and distinct. This condition is rather restrictive though; for instance, if $\dim T = 2$, then it forces $M$ to be at most $6$-dimensional.

\end{rem}

\begin{rem}\label{rem:naturality}
Let $M$, $N$ be two integer GKM manifolds satisfying Condition
\eqref{eq:standingassumption}. An isomorphism $\Phi\colon
(\Gamma_M,\alpha_M)\rightarrow (\Gamma_N,\alpha_N)$ of GKM graphs is an
isomorphism $\varphi\colon \Gamma_M\rightarrow \Gamma_N$ of graphs together with
an automorphism $\psi\colon T\rightarrow T$ intertwining the labels i.e.\
$\alpha_N(\varphi(e))=\psi^*(\alpha_{M}(e))$ where $\psi^*$ denotes the corresponding automorphism of $H^*(BT;\mathbb{Z})$. Then this induces an isomorphism
\[H_T^*(M;\mathbb{Z})\cong H^*_T(\Gamma_M;\mathbb{Z})\rightarrow H^*_T(\Gamma_N;\mathbb{Z})\cong H_T^*(N;\mathbb{Z})\]
by applying $\psi^*$ to all $H^*(BT;\mathbb{Z})$ components and identifying vertices according to $\varphi$. Similarly $\Phi$ induces an isomorphism
\[H_T^*(\Gamma_M;\mathbb{Z}_p)\oplus B^\ast(\Gamma,p)\rightarrow H_T^*(\Gamma_N;\mathbb{Z}_p)\oplus B^\ast(\Gamma,p).\]

The description from Theorem \ref{T: InjectionInZp} is natural in the sense that the diagram
\[\xymatrix{
H^*_T(M;\mathbb{Z}_p)\ar[r]^{\cong}& H_T^*(M;\mathbb{Z})\otimes \mathbb{Z}_p\ar[r]\ar[d] & H^*_T(N;\mathbb{Z})\otimes \mathbb{Z}_p\ar[r]^{\cong}\ar[d]& H_T^*(N;\mathbb{Z}_p)\\
&H_T^*(\Gamma_M;\mathbb{Z}_p)\oplus B^\ast(\Gamma,p)\ar[r]&H_T^*(\Gamma_N;\mathbb{Z}_p)\oplus B^\ast(\Gamma,p)&
}\]
commutes, provided the choice of signs and orientation of edges in $E(\Gamma, p)$ for the
construction of the vertical maps are compatible.
 \end{rem}

As an aside, we note that the passage to $X_M$ instead of the fixed point set in order to arrive at a GKM description of $H^\ast_T(M;\ZZ_p)$ was necessary:

\begin{prop}\label{prop:injectiveiff} Given Condition \eqref{eq:standingassumption}, the map $H^*_T(M;\ZZ_p)\to H^*_T(M^T;\ZZ_p)$ is injective if and only if none of the weights of the isotropy representations in the fixed points is divisible by $p$.
\end{prop}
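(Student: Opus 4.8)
The plan is to read off both directions from the factorization established in Theorem \ref{T: InjectionInZp}. First I would observe that the weights of the isotropy representations at the fixed points are exactly the edge labels $\alpha(e)$, so that the hypothesis that no such weight is divisible by $p$ is precisely the statement $E(\Gamma,p)=\emptyset$, equivalently $B^\ast(\Gamma,p)=0$. Next I recall from the proof of Theorem \ref{T: InjectionInZp} that the fixed-point restriction $H^\ast_T(M;\ZZ_p)\to H^\ast_T(M^T;\ZZ_p)$ equals $j\circ i^\ast=\pi\circ\Phi$, where $\pi$ is the projection onto $H^\ast_T(\Gamma;\ZZ_p)$ followed by the injective inclusion $H^\ast_T(\Gamma;\ZZ_p)\hookrightarrow H^\ast_T(M^T;\ZZ_p)$. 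Since Condition \eqref{eq:standingassumption} makes $\Phi$ injective, and $\ker\pi=\{0\}\oplus B^\ast(\Gamma,p)$, I obtain
\[
\ker\bigl(H^\ast_T(M;\ZZ_p)\to H^\ast_T(M^T;\ZZ_p)\bigr)\;\cong\;\mathrm{im}\,\Phi\,\cap\,\bigl(\{0\}\oplus B^\ast(\Gamma,p)\bigr).
\]
Using the commuting square of Theorem \ref{T: InjectionInZp} and surjectivity of the reduction $H^\ast_T(M;\ZZ)\to H^\ast_T(M;\ZZ_p)$, one has $\mathrm{im}\,\Phi=\mathrm{im}\,\Psi$, so injectivity of the restriction is equivalent to $\mathrm{im}\,\Psi\cap(\{0\}\oplus B^\ast(\Gamma,p))=0$. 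The ``if'' direction is then immediate: when no weight is divisible by $p$ we have $B^\ast(\Gamma,p)=0$, the intersection is trivially zero, and the restriction is injective.

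For the ``only if'' direction I would exhibit a nonzero element of this intersection whenever $E(\Gamma,p)\neq\emptyset$. I pick an edge $e_0\in E(\Gamma,p)$, let $v$ be one of its endpoints, and define, in the manner of an equivariant Thom class, a tuple $(f_q)\in\bigoplus_{q}H^\ast(BT;\ZZ)$ by $f_v=\prod_{e\in E_v}\alpha(e)$ (for a fixed choice of signs) and $f_q=0$ for $q\neq v$. This tuple lies in $H^\ast_T(\Gamma;\ZZ)$: the only nonzero differences $f_v-f_u$ occur along edges $e$ at $v$, and each is divisible by $\alpha(e)$ since $\alpha(e)$ is one of the factors of $f_v$. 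Applying $\Psi$, the $H^\ast_T(\Gamma;\ZZ_p)$-component vanishes because $p\mid\alpha(e_0)$ forces $f_v\equiv 0\bmod p$, while the $B^\ast(\Gamma,p)$-component at $e_0$ is $\pm\, r\bigl(\prod_{e\in E_v,\,e\neq e_0}\alpha(e)\bigr)$. Here I invoke Proposition \ref{prop:cond1ingraph}: Condition \eqref{eq:standingassumption} forces adjacent weights to be coprime, so no weight at $v$ other than $\alpha(e_0)$ is divisible by $p$; as $H^\ast(BT;\ZZ_p)$ is a polynomial ring, hence an integral domain, this product is nonzero modulo $p$. Thus $\Psi((f_q))$ is a nonzero element of $\mathrm{im}\,\Psi\cap(\{0\}\oplus B^\ast(\Gamma,p))$, and the restriction fails to be injective.

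I expect the only genuine subtlety to be the verification that the vertex-supported class $(f_q)$ really lies in $H^\ast_T(\Gamma;\ZZ)$ and that its surviving $B^\ast$-entry is nonzero; both rest on the coprimality of adjacent weights provided by Condition \eqref{eq:standingassumption} through Proposition \ref{prop:cond1ingraph}, which is exactly the standing hypothesis. The residual bookkeeping, namely the sign choices and which endpoint of $e_0$ plays the role of $v$, affects the outcome only up to a unit and is therefore harmless.
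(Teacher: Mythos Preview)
Your proof is correct and follows essentially the same route as the paper: for the ``only if'' direction both you and the paper exhibit the equivariant Thom class of a fixed point on an edge with $p$-divisible weight and use the coprimality of adjacent weights (Proposition~\ref{prop:cond1ingraph}) to see that it survives nontrivially modulo $p$ while restricting to zero on $M^T$; for the ``if'' direction the paper argues directly via Lemma~\ref{lem:inclusionofspheres} and Lemma~\ref{L:EquivariantCohmologySalpha}(iii), which is exactly what underlies your observation that $B^\ast(\Gamma,p)=0$ forces $\pi\circ\Phi$ to be injective. The only cosmetic difference is that you package everything through the factorization of Theorem~\ref{T: InjectionInZp} and the identity $\mathrm{im}\,\Phi=\mathrm{im}\,\Psi$, whereas the paper argues slightly more directly without invoking $\Psi$.
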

\begin{proof}
If some weight $\alpha$ is divisible by $p$, then consider the equivariant Thom
class (cf.\ \cite{MR1823050}) of any of the two fixed points in the corresponding sphere, with integer coefficients. This is, in the description $H^*_T(M;\ZZ)\subset \bigoplus H^*(BT;\ZZ)$, localized at that fixed point, and equal to the product of the weights at that point. Note that no other weight at this fixed point is divisible by $p$, by Condition  \eqref{eq:standingassumption}. Hence, the Thom class is not divisible by $p$ in $H^*_T(M;\ZZ)$. It thus survives to $H^*_T(M;\ZZ_p)$; on the other hand, upon restriction to the fixed point set, it vanishes with $\ZZ_p$-coefficients. Hence, the map induced by the inclusion $M^T\to M$ is not injective with $\ZZ_p$-coefficients.

Conversely, if none of the weights are divisible by $p$,  choose an element $\omega$ in the kernel of the map $H^*_T(M;\ZZ_p)\to H^*_T(M^T;\ZZ_p)$. If it is nonzero, then by Lemma \ref{lem:inclusionofspheres}, there is an invariant $2$-sphere to which $\omega$ restricts nontrivially. But then, by Lemma \ref{L:EquivariantCohmologySalpha} (iii), it restricts nontrivially to the fixed point set, which is a contradiction.\end{proof}

\begin{rem}\label{Rem:ProjectiveBorelModel}
\begin{enumerate}[label=(\alph*)]
	\item Let $\CC_{\alpha}$ denote $\CC$ together with the action of $T$ by
		$\alpha$ and let $T$ act on $\CC_{\alpha} \oplus \CC$ by $\alpha$ on the
		first factor and trivially on the second. Then $S_{\alpha}^{2}$ can be
		identified with the equivariant projectivization $\CC\PP^{1}_{\alpha}$ of
		$\CC_{\alpha} \oplus \CC$. The tautological bundle
		$\mathcal{O}(-1)$ is a subbundle of the equivariant trivial vector bundle
		$\CC\PP^{1} \times (\CC_{\alpha} \oplus \CC)$ given by
		\[
			\left\{([z_{1}:z_{2}],(w_{1},w_{2})) \in \CC\PP^{1} \times (\CC_{\alpha}
				\oplus \CC) :
			(w_{1}, w_{2}) \in [z_{1}:z_{2}] \right\}
		\]
   which has an induced $T$-action
   \[
		 t \cdot ([z_{1}, z_{2}], (w_{1}, w_{2})) = ([\alpha(t)z_{1}, z_{2}],
		 (\alpha(t)w_{1}, w_{2})) 
   \]
	 such that the projection to $\CC\PP^{1}_{\alpha}$ is equivariant.
 \item 
		The Borel model $(\CC\PP^{1}_{\alpha})_{T}$ can be viewed as a
		projectivisation of an equivariant complex rank $2$ vector bundle $E$ over $BT$. To
		see this, let $L_{\alpha} := ET \times_{T} \CC_{\alpha}$ and  
		\[
			E = ET \times_{T} (\CC_{\alpha}\oplus \CC) 
		\]
		where $T$ acts as in (a). 
		We obtain $\PP(E) = (\CC\PP^{1}_{\alpha})_{T}$. Moreover note
		that we identify $\alpha \in H^{2}(BT;\ZZ)$ with the first Chern class
		$c_{1}(L_{\alpha})$. Thus we have $c_{1}(E) = \alpha$ and $c_{2}(E) =0$. Let
		$\mathcal{O}_{\PP(E)}(-1)$ denote the tautological bundle over $\PP(E)$,
		i.e. $\mathcal{O}_{\PP(E)}(-1)$ restricted to
		every fiber of $\PP(E) \to BT$ is the tautological line bundle over
		$\CC\PP^{1}$. In other words we have 
		\[
			\mathcal{O}_{\PP(E)}(-1)  = ET \times_{T} \mathcal{O}(-1).
		\]
		Set $\xi_{\PP} := c_{1}(\mathcal{O}_{\PP(E)}(-1)) \in
		H^{2}_{T}(\CC\PP^{1}_{\alpha};\ZZ)$. Note that $\xi_{\PP}$ is the equivariant first
		Chern class of $\mathcal{O}(-1)$ viewed as an equivariant bundle over
		$\CC\PP^{1}_{\alpha}$, cf.\ (a). If $H_{T}^{\ast}(\CC\PP^{1}_{\alpha}; \ZZ)$
		is embedded in $H^{\ast}(BT;\ZZ)^{2}$ as in \eqref{eq:S2CS} we have 
		\[
			\xi_{\PP} = (\beta_{1}, \beta_{2}) 
		\]
		where $\beta_{i}$ are the weights of the $T$-action of the fibers of
		$\mathcal{O}(-1)$ over the fixed points of $\CC\PP^{1}_{\alpha}$. For
		$\beta_{1}$ we consider the point $[1:0]$ and the action on the fiber is
		given by $t \cdot (\lambda, 0) = (\alpha(t)\lambda, 0)$ for $\lambda \in
		\CC$ (see (a)). Thus for $\beta_{2}$ over $[0:1]$ we have $t \cdot
		(0,\lambda) = (0, \lambda)$ for $\lambda \in \CC$. Hence we obtain
		$\xi_{\PP} = (\alpha,0)$ and consequently in the description of
		$H^{\ast}_{T}(\CC\PP^{1}_{\alpha};\ZZ)$ of Lemma
		\ref{L:EquivariantCohmologySalpha} (i) we have $\xi_{\PP} = \xi$. 	
	\item Let $\mathcal{O}_{\PP(E)}(1)$ be the dual bundle of
		$\mathcal{O}_{\PP(E)}(-1)$. Clearly $\mathcal{O}_{\PP(E)}(1)$ restricted to
		each fiber of $\PP(E)$ is the hyperplane bundle $\mathcal{O}(1)$ of
		$\CC\PP^{1}$. Set $\xi^{\ast} := c_{1}(\mathcal{O}_{\PP(E)}(1))$, thus
		$\xi^{\ast} = -\xi_{\PP}$. As above one has $\xi^{\ast} = (-\alpha,0)$
		as an element of $H^{\ast}(BT; \ZZ)^{2}$.

		Applying the Leray-Hirsch theorem one obtains the cohomology
		of $H^{\ast}(\PP(E);\ZZ)$ as a $H^{\ast}(BT;\ZZ)$-module, also known as the Chow
		ring
		\begin{align*}
			H^{\ast}(\PP(E)) &= H^{\ast}(BT; \ZZ)[\xi^{\ast}]/((\xi^{\ast})^{2} +
			c_{1}(E)\xi^{\ast} + c_{2}(E)) \\
											 &= H^{\ast}(BT; \ZZ)[\xi^{\ast}]/((\xi^{\ast})^{2} +
			\alpha\xi^{\ast})
		\end{align*}
	  Thus in terms of $\xi_{\PP}$	we obtain the same description of the
		cohomology of the Borel model of $(\CC\PP^{1}_{\alpha})_{T}$ as in Lemma \ref{L:EquivariantCohmologySalpha}
		\[
			H^{\ast}(\PP(E)) = H^{\ast}(BT; \ZZ)[\xi_{\PP}]/(\xi_{\PP}^{2} - \alpha
			\xi_{\PP}).
		\]
\end{enumerate}
\end{rem}

 \section{Equivariant Stiefel--Whitney classes}

We denote by $r$ the map that reduces the coefficients of an integer polynomial to $\ZZ_2$. This map is well-defined on polynomials that are given only modulo sign.

\begin{lem}\label{lem:defSW}
Let $(\Gamma,\alpha)$ be a GKM graph. For any $v\in V(\Gamma)$ we set \[f_v=\prod_{e\in E_v}(1+r(\alpha(e)))\in H^*(BT;\mathbb{Z}_2).\] For an edge $e\in E(\Gamma,2)$ we choose a compatible connection $\nabla_e\colon E_{i(e)}\rightarrow E_{t(e)}$ as well as lifts $\tilde{\alpha}(-)$ of labels in $E_{i(e)}$ and $E_{t(e)}$ to $H^*(BT;\mathbb{Z})$ such that $\tilde{\alpha}(l)\equiv \tilde{\alpha}(\nabla_e l)\mod\alpha(e)$ for all $l\in E_{i(e)}$ and set
\[f_e=r\left(\frac{\prod_{l\in E_{i(e)}}(1+\tilde{\alpha}(l))-\prod_{l\in E_{t(e)}} (1+\tilde{\alpha}(l))}{\tilde\alpha(e)}\right)\in H^*(BT;\mathbb{Z}_2).\]
Then this defines an element of $H^*_T(\Gamma;\mathbb{Z}_2)\oplus B^\ast(\Gamma,2)$ which is independent of the choices made in the construction.
\end{lem}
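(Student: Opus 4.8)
The plan is to verify, in turn, that the vertex part $(f_v)$ lies in $H^*_T(\Gamma;\ZZ_2)$, that each edge component $f_e$ is a well-defined element of $H^{*-2}(BT;\ZZ_2)$, and finally that the whole tuple is independent of the connection, the lifts, and the auxiliary orientation and sign choices. The unifying principle throughout the independence part will be that every ambiguity is divisible by $2$ and hence invisible after reducing mod $2$.

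First I would treat the vertex part. Fix an edge $e$ with endpoints $q,r$ and a compatible connection $\nabla_e\colon E_q\to E_r$. Since $\alpha(\nabla_e l)\equiv\alpha(l)\bmod\alpha(e)$ up to sign and $r$ kills signs, one has $1+r(\alpha(\nabla_e l))\equiv 1+r(\alpha(l))\bmod r(\alpha(e))$ for every $l\in E_q$. As $\nabla_e$ is a bijection $E_q\to E_r$, multiplying these congruences and reindexing the right-hand product over $E_r$ gives $f_q\equiv f_r\bmod r(\alpha(e))$, which is precisely the defining condition of $H^*_T(\Gamma;\ZZ_2)$. When $e\in E(\Gamma,2)$ we have $r(\alpha(e))=0$, so this even forces $f_q=f_r$.

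Next, the edge part. For $e\in E(\Gamma,2)$ write $A=\prod_{l\in E_{i(e)}}(1+\tilde\alpha(l))$ and $B=\prod_{m\in E_{t(e)}}(1+\tilde\alpha(m))$. The chosen lifts satisfy $\tilde\alpha(l)\equiv\tilde\alpha(\nabla_e l)\bmod\alpha(e)$, so the corresponding factors are congruent mod $\alpha(e)$, and since $\nabla_e$ is a bijection we obtain $A\equiv B\bmod\alpha(e)$ in the polynomial ring $H^*(BT;\ZZ)$. As this ring is an integral domain and $\alpha(e)\neq 0$, the quotient $(A-B)/\tilde\alpha(e)$ is a well-defined integer class; taking the degree-$2j$ part of the numerator and dividing by the degree-$2$ class $\tilde\alpha(e)$ lands it in $H^{2j-2}(BT;\ZZ)$, so after reduction $f_e\in H^{*-2}(BT;\ZZ_2)$, as required for a summand of $B^*(\Gamma,2)$.

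The heart of the lemma is independence of the choices. Changing the lift $\tilde\alpha(e)$ by a sign only changes the denominator by a sign, which $r$ kills; reversing the orientation of $e$ replaces $(A-B)/\tilde\alpha(e)$ by $(B-A)/\tilde\alpha(\bar e)=\mp(A-B)/\alpha(e)$, again killed by $r$. For the remaining lift signs I would use the move that flips $\tilde\alpha(l_0)\mapsto-\tilde\alpha(l_0)$ and $\tilde\alpha(\nabla_e l_0)\mapsto-\tilde\alpha(\nabla_e l_0)$ simultaneously (preserving the congruences): this changes the numerator by $-2\bigl(\tilde\alpha(l_0)A''-\tilde\alpha(\nabla_e l_0)B''\bigr)$, where $A'',B''$ are the products over the complementary edge sets. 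Since $\tilde\alpha(l_0)\equiv\tilde\alpha(\nabla_e l_0)$ and $A''\equiv B''\bmod\alpha(e)$ via $\nabla_e$, the bracket is divisible by $\alpha(e)$, so dividing by $\tilde\alpha(e)$ leaves an overall factor $2$ and $f_e$ is unchanged mod $2$. As every lift valid for a fixed connection is reached from any other by such matched pair-flips, this yields independence of the lifts. The last and main obstacle is independence of the connection. Here I would exploit that $A$ and $B$, and hence $f_e$, reference only the lifts and the fixed edge sets and never $\nabla_e$ directly; the connection serves only to certify the divisibility $A\equiv B\bmod\alpha(e)$ and to organise the pair-flips. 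It therefore suffices to show $f_e$ takes the same value on any two lift-configurations for which $A-B$ is divisible by $\alpha(e)$. Two compatible connections differ only by a permutation of $E_{t(e)}$ interchanging edges with equal reduced label mod $\alpha(e)$, and tracking such an interchange shows it alters the numerator by a multiple of $2\alpha(e)$, leaving $f_e$ unchanged. I expect the bookkeeping in this final step — coherently permuting matched edges while controlling the integral lifts before the mod-$2$ reduction — to be the most delicate part of the argument.
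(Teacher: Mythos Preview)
Your approach is essentially the same as the paper's: verify the vertex congruences via the connection, check divisibility of the numerator for $f_e$, then reduce all ambiguities to multiples of $2\tilde\alpha(e)$ so that they vanish after applying $r$. Your pair-flip computation for the lift signs matches the paper's exactly.

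One point deserves sharpening. In the connection-independence step you write that two compatible connections differ by interchanging edges ``with equal reduced label mod $\alpha(e)$''. In fact the relevant congruence is $\tilde\alpha(l)\equiv\pm\tilde\alpha(l')\bmod\alpha(e)$, and the two signs behave differently. In the $+$ case the same lifts work for both connections and, as you observe, $A$ and $B$ are literally unchanged. In the $-$ case one is forced to flip the signs of \emph{both} $\tilde\alpha(\nabla_e l)$ and $\tilde\alpha(\nabla_e l')$ (and only these) to obtain lifts valid for the new connection; this is not a matched pair-flip in your earlier sense, so your lift-independence argument does not directly apply. The paper handles this by computing the change in $B$ explicitly: the factor $(1+\tilde\alpha(\nabla_e l))(1+\tilde\alpha(\nabla_e l'))$ becomes $(1-\tilde\alpha(\nabla_e l))(1-\tilde\alpha(\nabla_e l'))$, a difference of $2(\tilde\alpha(\nabla_e l)+\tilde\alpha(\nabla_e l'))$, and since $\tilde\alpha(\nabla_e l)+\tilde\alpha(\nabla_e l')\equiv 0\bmod\alpha(e)$ in this case, the change in $A-B$ is a multiple of $2\tilde\alpha(e)$. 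Your final sentence anticipates exactly this bookkeeping; the case split on the sign is what makes it go through cleanly.
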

\begin{proof}
The collection of the $f_v$, $v\in V(\Gamma)$ defines an element of $H^*_T(\Gamma;\mathbb{Z}_2)$ due to the existence of a compatible connection. The sign choices are arbitrary as the congruences between the $f_v$ are only checked modulo $2$.

Fixing an edge $e$, and $\nabla, \tilde{\alpha}(-)$ as in the construction of $f_e$, we check first that \[x=\prod_{l\in E_{i(e)}}(1+\tilde{\alpha}(l))-\prod_{l\in E_{t(e)}} (1+\tilde{\alpha}(l))\] is in fact divisible by $\tilde\alpha(e)$ due to the congruences $\tilde{\alpha}(l)\equiv \tilde{\alpha}(\nabla_e l)\mod\alpha(e)$. The element $x$ is determined by two choices: the connection $\nabla_e\colon E_{i(e)}\rightarrow E_{t(e)}$ as well as the subsequent choice of signs for the labels. Inverting the sign of $\tilde\alpha(l)$ for some $l\in E_{i(e)}$ forces a sign change in the corresponding label $\tilde{\alpha}(\nabla_e l)$ and consequently the value $x$ in the construction will differ by \[2\tilde\alpha(l)\prod_{l'\neq l} (1+\tilde{\alpha}(l'))-2\tilde\alpha(\nabla_e l)\prod_{l'\neq l} (1+\tilde{\alpha}(\nabla_e l')).\]
This difference is divisible by $2\tilde{\alpha}(e)$ hence $f_e$ does not depend on the sign choice.

If for some $l,l'\in E_{i(e)}\backslash \{e\}$ one has $\tilde{\alpha}(l)\equiv\pm\tilde{\alpha}(l') \mod \alpha(e)$ then the connection $\nabla$ can be modified to a connection $\nabla'$ by swapping the images of $l,l'$. Any other connection arises from $\nabla$ by operations of this type so it suffices to show that $\nabla'$ admits a choice of signs $\tilde{\alpha}'(-)$ such that the resulting $f_e$ is the same. If $\tilde{\alpha}(l)\equiv \tilde{\alpha}(l') \mod \alpha(e)$ then $\tilde{\alpha}=\tilde{\alpha}'$ works. Otherwise we have to modify $\tilde{\alpha}$ by setting $\tilde{\alpha}'(\nabla_e l)=-\tilde{\alpha}(\nabla_e(l))$ and $\tilde{\alpha}'(\nabla_e l')=-\tilde{\alpha}(\nabla_e l')$. With this modified sign choice the construction of $x$ changes by a multiple of $2(\tilde\alpha(\nabla_e l)+\tilde\alpha(\nabla_e l'))$. However since by assumption $\tilde{\alpha}(\nabla_e l)\equiv\tilde\alpha(l)\equiv -\tilde\alpha(l')\equiv\tilde\alpha(\nabla_e l')\mod\alpha(e)$ this difference is divisible by $2\tilde{\alpha}(e)$ and hence yields the same value for $f_e$.
\end{proof}

\begin{defn}\label{defn:eqSW}
We call the element $f\in H^*_T(\Gamma;\mathbb{Z}_2)\oplus B^\ast(\Gamma,2)$ the \emph{total equivariant Stiefel-Whitney class of $(\Gamma,\alpha)$}.
\end{defn}

\begin{lem}\label{lem:uniqueness} Consider a $T$-action on $S^2$ via a weight $\alpha$ and let $E\rightarrow S^2$ be an orientable real $T$-vector bundle such that the weights over the fixed points are linearly independent from $\alpha$.
Then $E$ is determined up to isomorphism by the weights (up to sign) over the fixed points.
\end{lem}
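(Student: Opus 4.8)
The plan is to classify equivariant real vector bundles over $S^2_\alpha$ by an equivariant clutching construction and to show that, under the hypothesis, the gluing datum is rigid.

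I would first set up the clutching. Write $S^2_\alpha=D_0\cup D_\infty$ as a union of two $T$-invariant closed disks around the two fixed points $p_0,p_\infty$, meeting in a $T$-invariant annulus $A$. Each $D_i$ is a disk in the representation $\CC_\alpha$, hence $T$-equivariantly contractible onto its central fixed point; by equivariant homotopy invariance of vector bundles, $E|_{D_i}$ is therefore equivariantly trivial, $E|_{D_i}\cong D_i\times V_i$, where $V_0=E_{p_0}$ and $V_\infty=E_{p_\infty}$ are the fibre representations. The annulus $A$ equivariantly deformation retracts onto a single principal orbit $T/H$, where $H=\ker\alpha$ is the common isotropy of the non-fixed points (note that $H$ need not be connected when $\alpha$ is not primitive). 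Consequently $E$ is reconstructed from the pair $(V_0,V_\infty)$ together with a clutching isomorphism over $A$, and since a $T$-equivariant map out of $T/H$ is determined by its value at the identity coset, which must be an $H$-fixed point, the relevant gluing datum lies, up to $T$-equivariant homotopy, in the space $\mathrm{Iso}_H(\mathrm{Res}^T_H V_0,\mathrm{Res}^T_H V_\infty)$ of $H$-equivariant isomorphisms. The isomorphism class of $E$ depends on this datum only up to homotopy (and up to the gauge action of $\mathrm{Aut}_T(V_0)\times\mathrm{Aut}_T(V_\infty)$ coming from the ambiguity in the trivialisations).

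Next I would note that the fibre representations are themselves pinned down by the weights. A real $T$-representation all of whose weights are nonzero is a direct sum of two-dimensional complex-type summands $\CC_\beta$, and its isomorphism type is determined by the multiset of weights taken modulo sign, since $\CC_\beta$ and $\CC_{-\beta}$ are isomorphic real $T$-representations. As every weight is linearly independent from $\alpha$, and in particular nonzero, both $V_0$ and $V_\infty$ are determined up to isomorphism by the prescribed weights.

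The crux, which I expect to be the main obstacle, is the rigidity of the gluing datum: I must show that $\mathrm{Iso}_H(\mathrm{Res}^T_H V_0,\mathrm{Res}^T_H V_\infty)$ is path-connected. It is nonempty because $E$ exists, which forces $\mathrm{Res}^T_H V_0\cong \mathrm{Res}^T_H V_\infty=:W$; so it is a torsor over $\mathrm{Aut}_H(W)$ and it suffices to prove that $\mathrm{Aut}_H(W)$ is connected. This is exactly where the hypothesis enters: a summand $\CC_\beta$ restricts to a trivial, respectively to a real (sign), $H$-representation precisely when $\beta$ is a rational multiple of $\alpha$, which is excluded by linear independence; hence each $\CC_\beta$ restricts to an irreducible $H$-representation of complex type. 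Decomposing $W$ into $H$-isotypic components then gives $\mathrm{Aut}_H(W)\cong\prod_i\GL_{n_i}(\CC)$, a product of complex general linear groups, which is connected. (The delicate point is the non-effective case, where $H$ is disconnected and real-type summands could a priori occur; linear independence rules them out, and the same reasoning shows $E$ carries a $T$-invariant complex structure, so that orientability is in fact automatic here.) Connectedness makes the gluing datum unique up to homotopy, so two bundles with identical fixed-point weights have isomorphic fibre representations and equivalent clutching data, hence are isomorphic; together with the previous paragraph this proves the lemma.
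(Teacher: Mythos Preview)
Your argument is correct and follows essentially the same route as the paper: an equivariant clutching over the two hemispheres, reduction of the gluing datum to the centralizer of $H=\ker\alpha$ in $\GL(V)$, and connectedness of that centralizer via the linear-independence hypothesis. The only noteworthy difference is in the final step: the paper first decomposes over the identity component $U_0\subset H$ to obtain $\prod_i\GL(k_i,\CC)$ and then argues that the centralizer of a generator of the finite cyclic part $G$ inside this product is connected via a Jordan block / Toeplitz computation, whereas you pass directly to the $H$-isotypic decomposition and observe that every irreducible real $H$-summand is of complex type (since $\beta|_{U_0}$ is nontrivial), giving $\mathrm{Aut}_H(W)\cong\prod_i\GL_{n_i}(\CC)$ in one stroke. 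Your version is slightly cleaner and avoids the Jordan form detour; both reach the same conclusion.
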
 
 
\begin{proof}
Denote by $N,S$ the fixed points of $S^2$, and by $S^2_N$, $S^2_S$ the corresponding hemispheres. Note that each hemisphere deformation retracts equivariantly to the corresponding fixed point so we obtain isomorphisms $E|_{S^2_N}\cong D^2\times E_N$ where $T$ acts diagonally and analogously for $S$. Having fixed these isomorphisms we identify $E$ as the gluing of $D^2\times E_N$ and $D^2\times E_S$ along an isomorphism \[\varphi\colon S^1\times E_N\rightarrow  S^1\times E_S\] which covers the identity on $S^1$.
We argue that the space of such isomorphisms is connected and hence the isomorphism class resulting from the gluing does not depend on $\varphi$. Consequently the isomorphism class as a real $T$-vector bundle will be determined by the isomorphism classes of the real $T$-representations $E_N$ and $E_S$ and hence by the weights up to sign.

Set $U=\ker(\alpha)$ and let $V$ be the $U$-representation given by the restriction of $E_N$ which gets identified via $\varphi|_{\{1\}\times E_N}$ with the restriction of $E_S$ to $U$. We identify domain and target of $\varphi$ with $T\times_U V$ via the map $[t,v]\mapsto (t\cdot 1, tv)$. Thus using these identifications any isomorphism $\sigma$ like $\varphi$ can be considered a $T$-equivariant automorphism of $T\times_U V$ covering the identity of $T/U$. We conclude the proof by arguing that the space of these automorphisms is connected.

Such an automorphism $\sigma$ induces a unique $A\in \GL(V)$ determined by
$\sigma([1,v])=[1,Av]$ which commutes with the $U$-action. Conversely such an
$A$ defines an automorphism of $T\times_U V$ by setting $\sigma([t,v])=[t,Av]$.
Hence it suffices to prove that the image of $U$ in $\GL(V)$ has connected
centralizer. We note that $U\cong U_0\times G$ where $U_0$ is a torus and $G$ is
a potentially trivial finite cyclic group. The $U_0$-representation $V$
decomposes into a sum of irreducible real representations $V\cong\bigoplus
\mathbb{C}_{\beta_i}^{k_i}$ where the $\beta_i$ are weights $U_0\rightarrow S^1$
with $\beta_i\neq\pm \beta_j$ for $i\neq j$ (note that the real
$U_0$-representations defined by $\beta_i$ and $-\beta_i$ are isomorphic).
Furthermore none of the $\beta_i$ are trivial as by assumption the weights are
linearly independent from $\alpha$.
An automorphism $A$ of the $U_0$-representations will respect each of the
summands $\mathbb{C}^{k_i}_{\beta_i}$ and we see that the centralizer of $U_0$ in $\GL(V)$ is isomorphic to $\prod
\GL(k_i,\mathbb{C})$. But then the centralizer of $U$ in $\GL(V)$ is isomorphic
to the centralizer of the image of a single generator of $G$ in $\prod
\GL(k_i,\mathbb{C})$ which is connected as well, as one can
observe by computing the centralizer of a Jordan block as upper triangular
Toeplitz matrices.
\end{proof} 

\begin{thm}\label{Thm:SWclass}
Let $M$ be an integer GKM manifold with GKM graph $(\Gamma,\alpha)$. Then the image of the total equivariant Stiefel-Whitney class of $M$ in $H^*_T(\Gamma;\mathbb{Z}_2)\oplus B^\ast(\Gamma,2)$ is the total equivariant Stiefel-Whitney class of $(\Gamma;\alpha)$.
\end{thm}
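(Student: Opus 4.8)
The plan is to compute the image $\Phi(w)$ of the total equivariant Stiefel--Whitney class $w:=w^T(TM)\in H^*_T(M;\ZZ_2)$ under the injection $\Phi$ of Theorem \ref{T: InjectionInZp}, and to compare it componentwise with the combinatorial class of Lemma \ref{lem:defSW}. Writing $\Phi(w)=((h_v),(g_e))$ with $(h_v)\in H^*_T(\Gamma;\ZZ_2)$ and $(g_e)\in B^\ast(\Gamma,2)$, the proof of Theorem \ref{T: InjectionInZp} shows that the vertex part $(h_v)$ is simply the fixed point restriction $j\circ i^\ast(w)$, while (using Lemma \ref{L:EquivariantCohmologySalpha} (iv)) the edge component $g_e$, for $e\in E(\Gamma,2)$, is the coefficient of $\xi_e$ in the restriction $w|_{S^2_{\alpha(e)}}\in H^*_T(S^2_{\alpha(e)};\ZZ_2)=H^*(BT;\ZZ_2)[\xi_e]/(\xi_e^2)$. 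It is exactly this coefficient that is invisible to fixed point restriction in $\ZZ_2$-coefficients (cf.\ Remark \ref{rem:signs}), so the main work lies in a genuinely global computation on each invariant $2$-sphere whose weight is divisible by $2$.

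The vertex part is immediate: at a fixed point $p$ the isotropy representation splits as $T_pM\cong\bigoplus_{e\in E_p}\CC_{\alpha(e)}$, whence $w|_p=\prod_{e\in E_p}(1+r(\alpha(e)))=f_p$. Thus $(h_v)=(f_v)$ agrees with the vertex part of the combinatorial class.

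For the edge part, fix $e\in E(\Gamma,2)$ and let $S=S^2_{\alpha(e)}$ be the corresponding invariant sphere with fixed points $i(e),t(e)$. Restricting $w$ to $S$ means computing $w^T(TM|_S)=w^T(TS)\cdot w^T(NS)$, where $NS$ is the normal bundle. Since $M$ and $S$ are orientable, $NS$ is an orientable real $T$-bundle whose fixed point weights $\{\alpha(l):l\in E_{i(e)}\setminus\{e\}\}$ resp.\ $\{\alpha(\nabla_e l)\}$ are linearly independent from $\alpha(e)$ by the GKM condition; hence by Lemma \ref{lem:uniqueness} it is isomorphic to $\bigoplus_{l\in E_{i(e)}\setminus\{e\}}L_l$, where $L_l$ is the equivariant complex line bundle over $S$ with $c_1^T(L_l)=(\tilde\alpha(l),\tilde\alpha(\nabla_e l))$ in the description \eqref{eq:S2CS}. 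Such a bundle exists precisely because of the connection compatibility $\tilde\alpha(l)\equiv\tilde\alpha(\nabla_e l)\bmod\alpha(e)$, and it is oriented as a complex bundle, so the model and $NS$ have matching fixed point weights up to sign. Treating $TS$ as the additional factor $L_e$ with $c_1^T(TS)=(\tilde\alpha(e),-\tilde\alpha(e))$ (which reduces to the trivial class $w^T(TS)=1+r(\alpha(e))=1$, so it contributes only harmlessly), we obtain the uniform product $w^T(TM|_S)=\prod_{l\in E_{i(e)}}(1+r(c_1^T(L_l)))$.

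Writing each factor in the basis $\{1,\xi_e\}$ as $P_l+Q_l\xi_e$ with $P_l=1+r(\tilde\alpha(\nabla_e l))$ and $Q_l=r\!\left(\frac{\tilde\alpha(l)-\tilde\alpha(\nabla_e l)}{\tilde\alpha(e)}\right)$, and using $\xi_e^2=0$, the coefficient of $\xi_e$ is $g_e=\sum_l Q_l\prod_{l'\neq l}P_{l'}$. The telescoping identity
\[
\frac{\prod_{l}(1+\tilde\alpha(l))-\prod_{l}(1+\tilde\alpha(\nabla_e l))}{\tilde\alpha(e)}=\sum_j\Big(\prod_{k<j}(1+\tilde\alpha(\nabla_e l_k))\Big)\frac{\tilde\alpha(l_j)-\tilde\alpha(\nabla_e l_j)}{\tilde\alpha(e)}\Big(\prod_{k>j}(1+\tilde\alpha(l_k))\Big)
\]
reduces mod $2$ to exactly this sum; here one uses that $2\mid\alpha(e)$ forces $r(\tilde\alpha(l_k))=r(\tilde\alpha(\nabla_e l_k))$, so the surviving constant factors coincide with the $P_{l_k}$. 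Since $\prod_{l\in E_{i(e)}}(1+\tilde\alpha(\nabla_e l))=\prod_{l'\in E_{t(e)}}(1+\tilde\alpha(l'))$, this yields $g_e=f_e$, completing the comparison. The main obstacle is the previous step: identifying $NS$ with the explicit sum of line bundles via Lemma \ref{lem:uniqueness} (verifying orientability and the linear-independence hypothesis, and that the model has the prescribed weights), together with keeping the $\xi_e$-bookkeeping consistent with the integral formula defining $f_e$.
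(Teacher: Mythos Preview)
Your proof is correct and follows essentially the same route as the paper's: both arguments compute the vertex part via the isotropy representation at fixed points, split $TM|_S=TS\oplus NS$ on each invariant sphere, invoke Lemma~\ref{lem:uniqueness} to write $NS$ as a sum of equivariant complex line bundles with the prescribed weights, and then read off the $B^\ast(\Gamma,2)$-component from the mod~$2$ reduction of the equivariant Chern class. The only difference is in the final bookkeeping for the edge component: the paper keeps the integral Chern class $\left((1+\alpha)\prod(1+\beta_i),(1-\alpha)\prod(1+\gamma_i)\right)$, applies Lemma~\ref{L:EquivariantCohmologySalpha}(iv) directly, and observes that the resulting numerator differs from the one defining $f_e$ by a multiple of $2\alpha$; you instead pass to $\ZZ_2$-coefficients first, expand each factor in the $\{1,\xi_e\}$ basis, and recover $f_e$ via a telescoping identity. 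These are equivalent computations.
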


\begin{proof}
The statement about the image in $H^*_T(\Gamma;\mathbb{Z}_2)$ was proved in
\cite[Proposition 3.5]{MR4088417}, by combining naturality of the Stiefel-Whitney classes with the fact that for any fixed point $q$, one may choose an invariant complex structure on $T_qM$, which allows to write the equivariant Stiefel-Whitney classes, restricted to $q$, as the $\mod 2$ reduction of the equivariant Chern classes. It remains to consider an invariant $2$-sphere $S$. We have an equivariant splitting $TM|_S=NS\oplus TS$. Now let $\alpha\in H^2(BT;\mathbb{Z})$ denote the weight of $S$ with an arbitrarily chosen sign and for a fixed point $q\in S^T$ let $\beta_1,\ldots,\beta_{n-1}\in H^2(BT;\mathbb{Z})$ the weights of $NS_q$ where again the sign is chosen arbitrarily. Then we may choose signed weights $\gamma_1,\ldots,\gamma_{n-1}\in H^2(BT;\mathbb{Z})$ at the other fixed point $r$ such that $\beta_i\equiv \gamma_i\mod \alpha$ for $i=1,\ldots,n-1$. There is an equivariant complex line bundle $L_i$ over $S$ with weights $\beta_i,\gamma_i$ over $q,r$, hence by Lemma \ref{lem:uniqueness} we have $NS\cong L_1\oplus\ldots\oplus L_{n-1}$. Furthermore we choose a $T$-invariant complex structure on $S$ such that the weights of $TS$ over $q,r$ become $\alpha,-\alpha$. Hence the total equivariant Stiefel-Whitney class of $TM|_S$ is the $\mod 2$ reduction of the total equivariant Chern class $c$ of $TS\oplus L_1\oplus\ldots\oplus L_{n-1}$. In the GKM description \[H^*_T(S;\mathbb{Z})\rightarrow H^*_T(\{q\};\mathbb{Z})\oplus H^*_T(\{r\};\mathbb{Z})\cong H^*(BT;\mathbb{Z})^2\]
the class $c$ restricts to $\left((1+\alpha)\prod_{i=1}^{n-1}(1+\beta_i),(1-\alpha)\prod_{i=1}^{n-1}(1+\gamma_i)\right)$. Hence, using the terminology from Lemma \ref{L:EquivariantCohmologySalpha}, the image in $B^\ast(\Gamma,2)$ is given by
\[r\left(\frac{(1+\alpha)\prod_{i=1}^{n-1}(1+\beta_i)-(1-\alpha)\prod_{i=1}^{n-1}(1+\gamma_i)}{\alpha}\right).\]
The denominator in the expression above agrees with the one in the definition of the equivariant Stiefel-Whitney class of $(\Gamma;\alpha)$ up to a multiple of $2\alpha$, hence the corresponding elements in $B^\ast(\Gamma,2)$ agree.
\end{proof}
 \begin{rem}\label{rem:GKZ6Korrektur}
In the setting of the Remark \ref{rem:naturality} with $p=2$, it was claimed in
\cite{MR4088417} that the top row of the commutative diagram
from that remark
 maps the equivariant Stiefel--Whitney classes onto one another. This is true and
 follows directly from the results of the last and this section: the vertical
 maps are injective (Theorem \ref{T: InjectionInZp}), the bottom horizontal map
 sends the total equivariant Stiefel--Whitney class of
 $(\Gamma_M,\alpha_M)$ to that of $(\Gamma_N,\alpha_N)$ (Definition
 \ref{defn:eqSW}) and the fact that these are the images of the total
 Stiefel--Whitney classes of $M$ and $N$ (Theorem \ref{Thm:SWclass}). The
 argument given in \cite{MR4088417} was similar and used the same
 commutative diagram but left out the $B^\ast(\Gamma,2)$ summands in the second row. This
 argument however was incomplete as the vertical maps are in general not
 injective without considering the $B^\ast(\Gamma,2)$ summands as is shown by Proposition
 \ref{prop:injectiveiff}.
\end{rem}
 
\begin{cor}\label{cor:nonrealizability}
Let $(\Gamma,\alpha)$ be a GKM-graph of an integer GKM manifold. Then the total equivariant Stiefel--Whitney class lies in the image of the map \[\Psi\colon H^*_T(\Gamma;\mathbb{Z})\rightarrow H^*_T(\Gamma;\mathbb{Z}_2)\oplus B^\ast(\Gamma,2).\]
\end{cor}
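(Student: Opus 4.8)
The plan is to read off the statement from the commutative square in Theorem \ref{T: InjectionInZp} (taken with $p=2$), combined with the identification of the combinatorial class provided by Theorem \ref{Thm:SWclass}. Write $w_M\in H^*_T(M;\ZZ_2)$ for the total equivariant Stiefel--Whitney class of $M$, and $w_\Gamma\in H^*_T(\Gamma;\ZZ_2)\oplus B^\ast(\Gamma,2)$ for the total equivariant Stiefel--Whitney class of $(\Gamma,\alpha)$ from Definition \ref{defn:eqSW}. By Theorem \ref{Thm:SWclass} we have $\Phi(w_M)=w_\Gamma$, so everything reduces to exhibiting a class in $H^*_T(\Gamma;\ZZ)$ that $\Psi$ maps to $\Phi(w_M)$.

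First I would lift $w_M$ to integral coefficients. Since $M$ is an integer GKM manifold we have $H^{odd}(M;\ZZ)=0$, so $H^*_T(M;\ZZ)$ is free over $H^*(BT;\ZZ)$ (as recalled after Lemma \ref{lem:changskjelbred}) and in particular torsion-free as an abelian group. By the universal coefficient theorem the reduction homomorphism $\rho\colon H^*_T(M;\ZZ)\to H^*_T(M;\ZZ_2)$ is then surjective (this is exactly the surjectivity recorded for the left vertical map in Theorem \ref{T: InjectionInZp}), and I would choose a class $c\in H^*_T(M;\ZZ)$ with $\rho(c)=w_M$.

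Letting $\mu\colon H^*_T(M;\ZZ)\to H^*_T(\Gamma;\ZZ)$ be the natural map in the top row of the diagram of Theorem \ref{T: InjectionInZp}, commutativity of the square gives
\[
\Psi(\mu(c))=\Phi(\rho(c))=\Phi(w_M)=w_\Gamma ,
\]
so that $w_\Gamma=\Psi(\mu(c))$ lies in the image of $\Psi$, as desired. The one genuinely nonformal point, and the step I expect to be the main obstacle, is the lifting in the previous paragraph: a mod $2$ class such as a Stiefel--Whitney class has no reason to come from integral cohomology in general, and here it is precisely the torsion-freeness forced by the GKM hypothesis $H^{odd}(M;\ZZ)=0$ that guarantees $\rho$ is onto. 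I would also remark that the argument uses neither the injectivity of $\Phi$ nor that $\mu$ is an isomorphism, so that Condition \eqref{eq:standingassumption} is not actually required for this corollary; only the existence of the maps in the diagram, its commutativity, and the identity $\Phi(w_M)=w_\Gamma$ enter.
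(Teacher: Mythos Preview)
Your argument is correct and is essentially the same as the paper's: lift $w_M$ to $H^*_T(M;\ZZ)$ using that torsion-free cohomology makes the reduction map surjective, then apply the commutative square of Theorem \ref{T: InjectionInZp} together with Theorem \ref{Thm:SWclass}. Your additional observation that Condition \eqref{eq:standingassumption} is not needed here is also correct, since neither the injectivity of $\Phi$ nor the isomorphism $H^*_T(M;\ZZ)\cong H^*_T(\Gamma;\ZZ)$ is invoked.
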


\begin{proof}
If $M$ has torsion-free cohomology, then the map $H_T^*(M;\mathbb{Z})\rightarrow H_T^*(M;\mathbb{Z}_p)$ is surjective. The claim now follows from the commutativity of the square in Theorem \ref{T: InjectionInZp}.
\end{proof} 

It is well known that the only obstruction for a spin structure on the frame
bundle of a manifold is given by the second Stiefel--Whitney class. We will show
here the corresponding result for equivariant spin structures. Before we do
that, we would like to settle some necessary definitions. 

Let $X$ be a topological space with a homotopy type of a CW complex. Consider
an oriented (real) vector bundle $E \to X$ endowed with a euclidean bundle metric. Denote
by $P_{\mathrm{SO}}(E)$ the bundle of oriented orthonormal frames of $E \to M$. Now assume
that $X$ is a $G$-CW complex (see \cite[Definition 2.1]{MR358761}) for a connected, compact Lie group $G$ and $E \to M$ is
equivariant vector bundle, such that $G$ acts by isometries on $E$. Then
$P_{\mathrm{SO}}(E)$ has a canonical induced action such that $P_{\mathrm{SO}}(E) \to M$ is equivariant.

\begin{defn}\label{D: equivariant Spin structures}
Let $X$ be a $G$-space and $E \to X$ an $G$-equivariant, real, oriented
euclidean vector bundle of rank $r$ over $X$. An
\emph{equivariant spin structure} on $E \to X$ is a pair $(P_{\mathrm{Spin}}(E),
\varphi)$ such that
\begin{enumerate}[label=(\alph*)]
	\item $P_{\mathrm{Spin}}(E) \to X$ is a $G$-equivariant
		$\mathrm{Spin}(r)$-principal bundle, i.e., the actions of $G$ and
		$\mathrm{Spin}(r)$ commute and the projection to the base is equivariant,
	\item $\varphi \colon P_{\mathrm{Spin}}(E) \to P_{\mathrm{SO}}(E)$ is a
		$G$-equivariant double
		covering,
	\item $\varphi$ is $\mathrm{Spin}(r)$-equivariant in the sense that
		\[
			\varphi(p \cdot s) = \varphi(p) \cdot \rho(s), 
		\]
		where $p \in P_{\mathrm{Spin}}(E)$, $s \in \mathrm{Spin}(r)$ and
		$\rho$ the double covering of $\mathrm{SO}(n)$.
		
\end{enumerate}

\end{defn}

\begin{thm}\label{T: Equivariant spin structures}
Let $X$ be a $G$-CW complex and $E \to X$ be as in the definition above.
Then, $E \to X$ admits an equivariant spin structure if and only if the
equivariant second Stiefel--Whitney class $w_{2}^{G}(E)$ vanishes.

In particular for a GKM manifold $M$ with acting torus $T$ and invariant
Riemannian metric satisfying Condition
\eqref{eq:standingassumption}, the tangent bundle of $M$ admits an equivariant spin structure if and only if 
\begin{enumerate}[label=(\alph*)]
	\item For all $v \in V(\Gamma)$ we have $\sum_{i=1}^{n} \alpha_{i} \equiv 0
		\mod 2$, where $\alpha_{i}$ are the weights of the isotropy representation
		at $v$ and
	\item for all $e \in E(\Gamma,2)$ 
	\[
		\frac{1}{\tilde{\alpha}(e)} \left(\sum_{l\in E_{i(e)}} \tilde{\alpha}(l) - \sum_{l\in E_{t(e)}} \tilde{\alpha}(l)\right)
		\equiv 0 \mod 2,
	\]
	where $\tilde{\alpha}(-)$ are lifts of the labels in $E_{i(e)}$ and $E_{t(e)}$ to $H^\ast(BT;\ZZ)$ such that $\tilde{\alpha}(l)\equiv \tilde{\alpha}(\nabla_e l)\mod \alpha(e)$ for all $l\in E_{i(e)}$.
	 \end{enumerate}
Moreover $M$ admits a (non-equivariant) spin structure if and
only if (same notation as above)
\begin{enumerate}[label=(\alph*)]
	\item  $\sum_{i=1}^{n}\alpha_{i}$ are
		identical $\mod 2$ for all $v \in V(\Gamma)$ and
	\item for all $e \in E(\Gamma,2)$ 
	\[
		\frac{1}{\tilde{\alpha}(e)} \left(\sum_{l\in E_{i(e)}} \tilde{\alpha}(l) - \sum_{l\in E_{t(e)}} \tilde{\alpha}(l)\right)
		\equiv 0 \mod 2.
	\]
\end{enumerate}
\end{thm}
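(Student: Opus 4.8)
The plan is to prove the abstract equivalence first, and then to read off the two combinatorial conditions for $M$ from Theorem~\ref{Thm:SWclass} together with the injectivity in Theorem~\ref{T: InjectionInZp}. For the abstract statement I would pass to the Borel construction $E_G := EG\times_G E \to X_G$. By definition $w_2^G(E)=w_2(E_G)\in H^2(X_G;\ZZ_2)=H^2_G(X;\ZZ_2)$, and taking Borel constructions of frame bundles gives $P_{\SO}(E)_G = P_{\SO}(E_G)$. The key reduction is that the Borel construction induces a bijection between $G$-equivariant spin structures on $E$, in the sense of Definition~\ref{D: equivariant Spin structures}, and ordinary spin structures on $E_G$. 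Granting this, the classical fact that an oriented bundle admits a spin structure exactly when its second Stiefel--Whitney class vanishes, applied to $E_G\to X_G$, yields that $E$ admits an equivariant spin structure iff $w_2^G(E)=0$.

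Establishing this bijection is the step I expect to require the most care. A $G$-equivariant spin structure is precisely a $G$-equivariant double cover of $P_{\SO}(E)$ that restricts to the connected cover $\spin(r)\to\SO(r)$ on each fibre. For a $G$-CW complex $Y$, genuine $G$-equivariant double covers are classified by $H^1_G(Y;\ZZ_2)=H^1(Y_G;\ZZ_2)$, the same group that classifies double covers of $Y_G$, and the Borel construction realizes this identification; applying this to $Y=P_{\SO}(E)$ and checking that the fibrewise-nontriviality condition is preserved in both directions (which one can verify orbitwise, reducing over $G/H$ to $H^1(BH;\ZZ_2)$) gives the claim. The delicate point is the descent direction, namely that a spin structure on $E_G$ comes from a genuine equivariant one on $E$, and this is exactly where the $G$-CW hypothesis enters. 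Alternatively one can run equivariant obstruction theory directly, with the single obstruction living in $H^2_G(X;\ZZ_2)$ and being identified with $w_2^G(E)$.

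For the GKM manifold $M$ I would apply the abstract statement to $E=TM$, $G=T$, $X=M$. Since $T$ is connected the action preserves an orientation, so $w_1^T(TM)=0$ and an equivariant spin structure exists iff $w_2^T(TM)=0$. Under Condition~\eqref{eq:standingassumption} the map $\Phi$ of Theorem~\ref{T: InjectionInZp} is injective, so $w_2^T(TM)=0$ iff $\Phi(w_2^T(TM))=0$; by Theorem~\ref{Thm:SWclass} the element $\Phi(w_2^T(TM))$ is the degree-$2$ component of the combinatorial total equivariant Stiefel--Whitney class of $(\Gamma,\alpha)$. Reading off this component from Lemma~\ref{lem:defSW}: its $H^2_T(\Gamma;\ZZ_2)$-part is $\big(r(\sum_{i}\alpha_i)\big)_{v\in V(\Gamma)}$, which vanishes exactly when condition~(a) holds, and its $B^2(\Gamma,2)$-part is the degree-$0$ part of each $f_e$, whose vanishing is precisely condition~(b).

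For the non-equivariant statement I would compare $w_2(TM)$ with $w_2^T(TM)$ via the fibre inclusion $\iota\colon M\hookrightarrow M_T$, for which $\iota^*w_2^T(TM)=w_2(TM)$; as $M$ is orientable, $M$ is spin iff $w_2(TM)=0$, i.e.\ iff $w_2^T(TM)\in\ker\iota^*$. Because $H^*(M;\ZZ_2)$ and $H^*(BT;\ZZ_2)$ are concentrated in even degrees the Serre spectral sequence of the Borel fibration collapses, so in degree $2$ the kernel of $\iota^*$ is exactly the image of $H^2(BT;\ZZ_2)$; since $\Phi$ is an $H^*(BT;\ZZ_2)$-algebra map with $\Phi(1)=(1;0)$, this image is carried to the constant tuples with vanishing $B^*(\Gamma,2)$-part. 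Hence, using injectivity of $\Phi$ once more, $w_2(TM)=0$ iff $\Phi(w_2^T(TM))$ is a constant tuple with trivial $B$-part, that is iff the degree-$2$ classes $r(\sum_i\alpha_i)$ agree for all vertices (condition~(a), now in the form ``identical $\bmod\, 2$'' rather than ``$\equiv 0$'') and the degree-$0$ parts of the $f_e$ vanish (the same condition~(b) as before).
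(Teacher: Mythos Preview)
Your proposal is correct and follows essentially the same route as the paper. Both arguments reduce the abstract equivalence to the classical criterion via the Borel construction and the correspondence between $G$-equivariant double covers of $P_{\SO}(E)$ and ordinary double covers of $(P_{\SO}(E))_G$; the paper invokes \cite[Theorem~A]{MR711050} for this descent step where you phrase it via the classification $H^1_G(Y;\ZZ_2)\cong H^1(Y_G;\ZZ_2)$, and both derive the GKM criteria from Theorem~\ref{Thm:SWclass} together with the injectivity in Theorem~\ref{T: InjectionInZp}. Your treatment of the non-equivariant case, making the identification $\ker\iota^*\cap H^2_T(M;\ZZ_2)=\operatorname{im}H^2(BT;\ZZ_2)$ explicit via collapse of the Serre spectral sequence, is in fact more detailed than the paper's, which simply asserts the final equivalence.
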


\begin{proof}
Assume first that $w_{2}^{G}(E)=0$. Then the principal bundle
$(P_{\mathrm{SO}}(E))_{G} = EG \times_{G} P_{\mathrm{SO}}(E) \to X_{G}$ admits a spin
structure say $\widetilde P \to (P_{\mathrm{SO}}(E))_{G}$ which is a double
cover. The frame bundle
$P_{\mathrm{SO}}(E)$ has the homotopy type of a $G$-CW complex, therefore we infer
from \cite[Theorem A]{MR711050} that there is a $G$-equivariant double covering
$P \to P_{\mathrm{SO}}(E)$ such that $P_{G} = EG \times_{G} P \cong \widetilde
P$. Since $P \to P_{\mathrm{SO}}(E)$ is the pullback of $P_{G}$ by the
fiber inclusion $P_{\mathrm{SO}}(E) \to (P_{\mathrm{SO}}(E))_{G}$ it follows that $P$ is the pullback of $\widetilde
P$. Thus the concatenation of the maps $P \to P_{\mathrm{SO}}(E) \to X$ is a
$\mathrm{Spin}(r)$-principal bundle, which is the pullback of $\widetilde P \to
X_{G}$ by the fiber inclusion. From the fact that the $G$-action commutes with the orthogonal action on $P_{\mathrm{SO}}(E)$ we infer that the $G$-action on $P$ commutes with the $\mathrm{Spin}(r)$-action: for fixed $g\in G$ and $p\in P$ the maps ${\mathrm{Spin}}(r)\to P$ given by $h\mapsto g\cdot (h\cdot p)$ and $h\mapsto h\cdot (g\cdot p)$ are lifts of the same map to $P_{\mathrm{SO}}(E)$. Hence $P \to X$ is an
equivariant spin structure for $E \to X$.

Conversely, if $P_{\mathrm{Spin}}(E) \to P_{\mathrm{SO}}(E)$ is a $G$-equivariant
spin structure, then $EG \times_{G} P_{\mathrm{Spin}}(E) \to EG \times_{G}
P_{\mathrm{SO}}(E)$  is a spin structure of $EG \times_{G} E$, hence $0 =
w_{2}(EG \times_{G} E) = w_{2}^{G}(E)$.

For the second claim, note first that every $G$-manifold is a $G$-CW complex
(cf. \cite[Proposition 4.4]{MR290354}) and that the $T$ on $M$ induces a canonical action on
$TM \to M$ by taking differentials. This turns $TM \to M$ into a $T$-equivariant
bundle. Now use the combinatorial description of $w_{2}^{T}(M)$ from Theorem \ref{Thm:SWclass}.

Finally, the last claim follows from the fact that the
existence of a spin structure on $M$ is equivalent to $w_{2}(M)$ being zero.
From the naturality property of the Stiefel-Whitney classes the latter condition
holds if and only if $w_{2}^{T}$ lies in the kernel of the map
$H^{2}_{T}(M;\ZZ_{2}) \to H^{2}(M; \ZZ_{2})$ induced by the fiber
inclusion from $M$ into the Borel model. But clearly this is equivalent to the
$H^{2}_{T}(\Gamma; \ZZ_{2})$-part of
$w_{2}^{T}(M)$ fulfilling condition (a) and the $B^2(\Gamma,2)$-part vanishing as in
condition (b).
\end{proof}

\section{Non-realizability via Stiefel--Whitney classes}

\begin{thm}\label{thm:nonrealizable}
In dimension $2n\geq 8$ there is an $n$-valent effective $T^2$-GKM graph $(\Gamma,\alpha)$ such that $H^*_T(\Gamma;\mathbb{Z})$ is free over $H^*(BT;\mathbb{Z})$ and $H^*(\Gamma;\mathbb{Q})=H_T^*(\Gamma;\mathbb{Q})/H^+(BT;\mathbb{Q})\cdot H^*_T(\Gamma;\mathbb{Q})$ satisfies Poincaré duality with fundamental class in degree $2n$ but which is not realizable by an integer GKM manifold.
\end{thm}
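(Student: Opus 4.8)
The plan is to prove non-realizability through the obstruction of Corollary \ref{cor:nonrealizability}: the GKM graph of any integer GKM manifold has its combinatorial total equivariant Stiefel--Whitney class contained in the image of $\Psi\colon H^*_T(\Gamma;\ZZ)\to H^*_T(\Gamma;\ZZ_2)\oplus B^*(\Gamma,2)$, so it suffices to exhibit a GKM graph for which this membership fails. I would in addition arrange that all adjacent weights are coprime (so that, by Proposition \ref{prop:cond1ingraph}, any potential realization satisfies Condition \eqref{eq:standingassumption} and the corollary indeed applies), and that the graph is orientable in the sense of Definition \ref{defn:orientablegkmgraph} and has free $H^*_T(\Gamma;\ZZ)$ with Poincaré-dual rational quotient. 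In this way the non-realizability is genuinely detected by the Stiefel--Whitney class and not by any of the cruder criteria.

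First I would construct the base case $2n=8$: a $4$-valent effective $T^2$-GKM graph with labels in $\ZZ^2$. The decisive feature is that $E(\Gamma,2)\neq\emptyset$, so at least one label is divisible by $2$; to keep adjacent weights coprime I ensure that no two edges sharing a vertex are simultaneously even. The remaining labels are chosen pairwise linearly independent at every vertex and spanning $\ZZ^2$ (effectiveness), and a compatible connection is fixed, certifying that $(\Gamma,\alpha)$ is an abstract GKM graph. Crucially the graph must not be a product of sphere graphs, which would be automatically realizable, so I would take a genuinely $4$-valent graph whose connection mixes the even edge with the others. For $2n>8$ I would enlarge the example to higher valence, for instance by adjoining further pairwise-independent primitive directions, and check that the Stiefel--Whitney obstruction described below persists.

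Before addressing the obstruction I would verify the structural requirements, all of which are combinatorial once the graph is fixed. Orientability is checked by evaluating the sign invariant $\eta$ of Definition \ref{defn:orientablegkmgraph} around a generating set of cycles. Freeness of $H^*_T(\Gamma;\ZZ)$ over $H^*(BT;\ZZ)$ I would establish by exhibiting a flow-up (Morse-theoretic) module basis for a generic ordering of the vertices; Poincaré duality of $H^*(\Gamma;\QQ)=H^*_T(\Gamma;\QQ)/H^+(BT;\QQ)\cdot H^*_T(\Gamma;\QQ)$ then follows from the symmetry of the resulting Betti numbers together with nondegeneracy of the intersection pairing read off the explicit presentation of the ring.

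The heart of the argument, and the step I expect to be the main obstacle, is to show that the combinatorial total Stiefel--Whitney class $(f_v,f_e)$ of Lemma \ref{lem:defSW} is not in $\mathrm{im}(\Psi)$. I would restrict to the degree-$2$ component, where $(f_v)$ is the $\ZZ_2$-sum of the weights at each vertex and the $B^2(\Gamma,2)$-entry on an even edge $e$ is the quantity appearing in Theorem \ref{T: Equivariant spin structures}(b). Elements of $\mathrm{im}(\Psi)$ in this degree come from honest integral classes $(h_q)$ with $h_q\in H^2(BT;\ZZ)$ and $h_q\equiv h_r\bmod\alpha(e)$ along every edge. The key point is that the obstruction is global rather than local on $e$: on a single even edge the parity of the induced derivative $r(\tilde\alpha(e)^{-1}(h_{i(e)}-h_{t(e)}))$ is unconstrained, but no single integral tuple $(h_q)$ can simultaneously reduce to $(f_v)$ and yield the derivatives prescribed by $(f_e)$ on all even edges at once. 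Concretely, I would compute $\mathrm{im}(\Psi)$ in this degree as a $\ZZ_2$-subspace of $H^2_T(\Gamma;\ZZ_2)\oplus B^2(\Gamma,2)$ and produce a $\ZZ_2$-linear functional annihilating it but not $(f_v,f_e)$, i.e.\ a linear congruence among the vertex weight-sums and the edge derivatives that holds for every integral class yet is violated by the Stiefel--Whitney class. This is precisely the non-surjectivity of $\Psi$, the discrepancy between $H^*_T(\Gamma;\ZZ)\otimes\ZZ_2$ and the $\ZZ_2$-intrinsic description of Theorem \ref{T: InjectionInZp}, now made visible by the Stiefel--Whitney class. Tuning the labels so that this separating functional isolates $(f_v,f_e)$ from $\mathrm{im}(\Psi)$ while preserving all the formal properties above is the delicate part of the example.
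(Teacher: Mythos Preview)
Your plan is correct and matches the paper's approach: construct a concrete $4$-valent $T^2$-example with an even edge, verify freeness and Poincar\'e duality by explicit computation, show the degree-$2$ Stiefel--Whitney class is not in $\mathrm{im}\,\Psi$, and pass to higher dimensions by taking products with single-edge (sphere) graphs.

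One point worth sharpening: you frame the obstruction as a global interplay among several even edges and the vertex data $(f_v)$, but the paper's mechanism is simpler and purely local. The example has four vertices arranged so that each even edge (label $2y$) is \emph{parallel} to a primitive edge (label $x$) between the same pair of vertices. For any $h\in H^2_T(\Gamma;\ZZ)$ the two congruences $h_{i(e)}\equiv h_{t(e)}\bmod x$ and $h_{i(e)}\equiv h_{t(e)}\bmod 2y$ force $h_{i(e)}=h_{t(e)}$ outright in degree~$2$, so the $B^2(\Gamma,2)$-component of $\Psi(h)$ vanishes identically. Meanwhile a direct computation gives a nonzero $f_e$ in degree~$2$. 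Thus the separating functional is simply projection to a single $B^2$-coordinate, and no reference to $(f_v)$ is needed. Building this double-edge feature into your example from the start will make the non-realizability argument a one-line computation rather than a search for a linear functional.
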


\begin{proof}
We prove the statement in dimension $8$ by giving a concrete example. Higher dimensional examples can be constructed by taking the product with single edge graphs with generic label (i.e., GKM graphs of a suitable $S^2$). Let $x,y$ denote a basis of $H^2(BT;\mathbb{Z})$. Consider the GKM graph

\begin{center}
		\begin{tikzpicture}

			\node (a) at (0,0)[circle,fill,inner sep=2pt] {};
			\node (b) at (6,0)[circle,fill,inner sep=2pt]{};
			\node (c) at (0,4)[circle,fill,inner sep=2pt]{};	
			\node (d) at (6,4)[circle,fill,inner sep=2pt]{};			
			
			\node at (-0.8,2) {$x$};
			\node at (+0.8,2) {$2y$};
			\node at (6-0.8,2) {$x$};
			\node at (6+0.8,2) {$2y$};
			\node at (3,5) {$x+y$};
			\node at (3,3) {$x+2y$};
			\node at (3,1) {$x-y$};
			\node at (3,-1) {$x-2y$};

			\draw [very thick](a) to[in=160, out=20] (b);
			\draw [very thick](a) to[in=200, out=-20] (b);
			\draw [very thick](c) to[in=160, out=20] (d);
			\draw [very thick](c) to[in=200, out=-20] (d);
			\draw [very thick](a) to[in=250, out=110] (c);
			\draw [very thick](a) to[in=290, out=70] (c);
			\draw [very thick](b) to[in=250, out=110] (d);
			\draw [very thick](b) to[in=290, out=70] (d);
			
		\end{tikzpicture}
	\end{center}
	The fact that $H_T^*(\Gamma;\mathbb{Z})$ is free over $H^*(BT;\mathbb{Z})$ and that $H^*_T(\Gamma;\mathbb{Z})$ satisfy Poincaré duality can be checked by computing explicit generators for the above $H^*(BT;\mathbb{Z})$ modules. These turn out to be given by the elements
	\[a_{1} = \begin{pmatrix}
	1\\1\\1\\1
	\end{pmatrix},\quad
	a_{2} = \begin{pmatrix}
	x^2-3xy+2y^2\\
	x^2+3xy+2y^2\\
	0\\ 0
	\end{pmatrix},\quad
	a_{3} = \begin{pmatrix}
	0\\2xy\\2xy\\0
	\end{pmatrix},\quad
	a_{4} = \begin{pmatrix}
	2x^3y-6x^2y^2+4xy^3\\0\\0\\0
	\end{pmatrix}	
	\]
	of $H^*(BT;\mathbb{Z})^{4}$, where the components correspond to the vertices starting with the lower right one and proceeding counterclockwise. The following lines of code carry out this computation in \emph{Macaulay2} 
\begin{center}
\begin{BVerbatim}[fontsize=\scriptsize]
R=ZZ[x,y] 
C=cokernel(diagonalMatrix({x,2*y,x+y,x+2*y,x,2*y,x-y,x-2*y}))
f=map(C,R^4,{{1,-1,0,0},{1,-1,0,0},{0,1,-1,0},{0,1,-1,0},{0,0,1,-1},{0,0,1,-1},{-1,0,0,1},{-1,0,0,1}})
kernel(f)
\end{BVerbatim}
\end{center}

In the code above $R=H^*(BT;\mathbb{Z})$ and $C$ is the direct sum of all
$R/(\alpha(e))$ where $e$ runs over all (non-oriented) edges. The order of the
summands is such that the first two correspond to the double edge on the right
from which point on we proceed counterclockwise while always prioritizing the
left hand resp.\ top edge. The map $f\colon R^4\rightarrow C$ is defined such
that its component corresponding to an edge $e$ is given by taking the
difference of the values at $i(e)$ and $t(e)$ where all edges are oriented
counterclockwise.

Having computed these generators, their linear independence over $R$ can be
computed directly via a determinant argument. To check Poincaré duality we compute
\[a_2a_3=-a_4+2xya_2.\]
	
	 It remains to show that $(\Gamma,\alpha)$ is not geometrically realizable.	The equivariant Stiefel--Whitney class of $(\Gamma,\alpha)$ has a nontrivial $B^\ast(\Gamma,2)$ component in degree $2$. To compute this we consider the left hand vertical edge $e$ of label $2y$ and compute the element $f_e$ as defined in Lemma \ref{lem:defSW}. The sign choices depicted above are admissible in the sense of the definition. Then the degree $2$ part computes as
	\begin{align*}
	r\left(\frac{(x-2y)+(x-y)+x+2y-((x+y)+(x+2y)+x+2y)}{2y}\right)=r\left(\frac{-5y}{y}\right)=1
	\end{align*}
	We argue that the image of $\Psi\colon H_T^*(\Gamma;\mathbb{Z})\rightarrow H^*_T(\Gamma;\mathbb{Z}_2)\oplus B^\ast(\Gamma,2)$ has trivial $B^2(\Gamma,2)$ component, which then implies nonrealizability of $(\Gamma,\alpha)$ by Corollary  \ref{cor:nonrealizability}. Indeed if $f\in H_T^2(\Gamma;\mathbb{Z})$ then $f_{i(e)}\equiv f_{t(e)}\mod x$ and $f_{i(e)}\equiv f_{t(e)}\mod 2y$ for both edges $e\in E(\Gamma,2)$. This implies $f_{i(e)}=f_{t(e)}$ and hence \[r\left(\frac{f_{i(e)}-f_{t(e)}}{2y}\right)=0.\]
\end{proof}

\begin{rem}\label{rem:3valentrealizable}
One should compare this example with the main result of \cite{2210.01856v1} by which a $3$-valent  $T^2$-GKM graph $(\Gamma,\alpha)$ is realizable by an integer GKM manifold if and only if $H^*_T(\Gamma;\mathbb{Z})$ is free over $H^*(BT;\mathbb{Z})$ and $H^*(\Gamma;\mathbb{Q})$ satisfies Poincaré duality with fundamental class in degree $6$. In other words, Theorem \ref{thm:nonrealizable} shows that the necessary and sufficient criteria known for $T^2$-actions in dimensions $\leq 6$ no longer suffice in dimensions $\geq 8$ as the criterion from Corollary \ref{cor:nonrealizability} becomes a nontrivial obstruction. 

We point out that it is shown in \cite[Corollary 2.28]{2210.01856v1} that a
$3$-valent $T^2$-GKM graph such that $H^*_T(\Gamma;\mathbb{Q})$ satisfies Poincaré
duality with fundamental class in degree $6$ is automatically orientable (see
Definition \ref{defn:orientablegkmgraph}). This is why in the realization
theorem of \cite{2210.01856v1} orientability is not listed as a separate condition. In higher dimensions we do not know of any general implication between orientability and Poincaré duality of the equivariant graph cohomology; however, the counterexample in Theorem \ref{thm:nonrealizable} is orientable as well as one can check directly by hand.
\end{rem}

Remark \ref{rem:3valentrealizable} together with Corollary \ref{cor:nonrealizability} imply the purely combinatorical observation that for a $3$-valent graph satisfying the conditions listed in the remark, the total equivariant Stiefel--Whitney class lies in the image of
$H_T^*(\Gamma;\mathbb{Z})\rightarrow H_T^*(\Gamma;\mathbb{Z}_2)\oplus B^\ast(\Gamma,2)$. The remainder of this section is dedicated to giving a direct combinatorial proof of this fact with slightly reduced assumptions.

\begin{prop}\label{prop:SW3valent}
Let $(\Gamma,\alpha)$ be a 3-valent orientable $T^2$-GKM graph. Then the total equivariant Stiefel--Whitney class lies in the image of $H^*_T(\Gamma;\mathbb{Z})\rightarrow H^*_T(\Gamma;\mathbb{Z}_2)\oplus B^\ast(\Gamma,2)$.
\end{prop}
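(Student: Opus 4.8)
The plan is to exhibit an explicit class $g\in H^*_T(\Gamma;\ZZ)$ in integral graph cohomology whose image under $\Psi$ is the total equivariant Stiefel--Whitney class $w=(w^{\mathrm{vert}},w^{\mathrm{edge}})$ of $(\Gamma,\alpha)$ (Lemma~\ref{lem:defSW}, Definition~\ref{defn:eqSW}); by the description of the image in Theorem~\ref{T: InjectionInZp} this is exactly the assertion. Unwinding the definition of $\Psi$, I must find $g$ with $r(g_v)=w^{\mathrm{vert}}_v$ for every vertex and $r\bigl((g_{i(e)}-g_{t(e)})/\alpha(e)\bigr)=w^{\mathrm{edge}}_e$ for every $e\in E(\Gamma,2)$. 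The naive candidate is the combinatorial total Chern class $C_v:=\prod_{e\in E_v}(1+\tilde\alpha(e))$ for a global choice of signed lifts with $\tilde\alpha(\bar e)=-\tilde\alpha(e)$: reducing modulo $2$ kills the signs and gives $r(C_v)=w^{\mathrm{vert}}_v$, while the numerator $C_{i(e)}-C_{t(e)}$ is, up to signs, the one defining $w^{\mathrm{edge}}_e$ in Lemma~\ref{lem:defSW}. The only defect is that $C=(C_v)$ need not lie in $H^*_T(\Gamma;\ZZ)$: across an edge $e_0$ the discrepancy $C_{i(e_0)}-C_{t(e_0)}\bmod\alpha(e_0)$ is governed by the two connection signs $\epsilon_f$ (three-valence) and the invariant $\eta(e_0)$, is always divisible by $2$, but is not in general divisible by $\alpha(e_0)$.

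The second step is to correct $C$ to $g_v=C_v+2m_v$ with $m_v\in H^*(BT;\ZZ)$, which preserves the mod~$2$ reduction and hence $w^{\mathrm{vert}}$. The graph-cohomology congruence at an edge $e_0$ then becomes a prescribed value for $m_{i(e_0)}-m_{t(e_0)}$ in the quotient $H^*(BT;\ZZ)/(\alpha(e_0))$, and finding a global $(m_v)$ realizing all these prescribed edge-differences is a potential (coboundary) problem on $\Gamma$, solvable precisely when the signed sum of the prescribed differences around every cycle vanishes. This is where I expect orientability to enter: using a spanning tree one propagates $m_v$ freely, and the cyclic obstruction attached to each non-tree edge should be expressible through the holonomy of the connection signs, i.e.\ through the product $\prod\eta(e_i)$ around the corresponding cycle, which is trivial exactly by Definition~\ref{defn:orientablegkmgraph}. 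The rank~$2$ hypothesis is used to keep each quotient $H^{2}(BT;\ZZ)/\ZZ\cdot\alpha(e_0)$ of rank one, so that the linear algebra over the cycle space of $\Gamma$ stays one-dimensional per edge, and $3$-valence fixes the local shape of the discrepancy.

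The third step is the $B^\ast$-matching. For $e\in E(\Gamma,2)$ write $\alpha(e)=2\beta$; graph coherence only forces $m_{i(e)}-m_{t(e)}$ to a residue modulo $\beta$, whereas the remaining residue modulo $\alpha(e)$ is exactly the freedom that controls $r\bigl((g_{i(e)}-g_{t(e)})/\alpha(e)\bigr)$, and hence whether the $B^\ast$-component of $\Psi(g)$ still equals $w^{\mathrm{edge}}_e$ after the correction. I would therefore organize the spanning-tree construction so that along even edges the correction $m$ is chosen compatibly with the divided difference of Lemma~\ref{lem:defSW}, and then verify that the finer residue constraints imposed over $E(\Gamma,2)$ do not over-determine the system, once more invoking orientability for global consistency.

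The main obstacle is the coupling between the second and third steps: one must solve the integral congruences that force $g$ into $H^*_T(\Gamma;\ZZ)$ while simultaneously pinning down the finer residues over $E(\Gamma,2)$ that fix the $B^\ast$-component, all within the freedom of a single global correction $(m_v)$. Making this precise amounts to showing that the combined cyclic obstruction — graph coherence twisted by the even-edge refinement — is measured by the $\eta$-signs and is therefore annihilated by orientability. The contrast with the $4$-valent orientable but non-realizable example of Theorem~\ref{thm:nonrealizable}, where the analogous $B^2$-residue provably cannot be attained, strongly suggests that three-valence is genuinely needed to guarantee enough room in $H^2_T(\Gamma;\ZZ)$, and I expect this to be the decisive point of the argument.
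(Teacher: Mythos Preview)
Your proposal outlines a plausible strategy but is not a proof; the key steps are left as expectations. The most concrete issue is the ``potential problem'' in step two: the prescribed edge-differences live in distinct quotients $H^*(BT;\ZZ)/(\alpha(e))$, so the claim that the system ``is solvable precisely when the signed sum of the prescribed differences around every cycle vanishes'' is not well-posed as written---summing elements of different quotient groups has no meaning, and after lifting to $H^*(BT;\ZZ)$ the cycle condition becomes a nontrivial membership in a sum of principal ideals, not a simple vanishing. You assert that this obstruction is controlled by the $\eta$-signs and hence killed by orientability, but give no argument; this is precisely the content that needs proof, and you flag it yourself as ``the decisive point''. The $B^*$-matching in step three is likewise left open. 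Finally, your invocation of the rank-$2$ hypothesis (keeping $H^2(BT;\ZZ)/\ZZ\alpha(e)$ of rank one) only addresses degree~$2$, whereas the correction $m$ must be handled in degrees $2$, $4$, and $6$ simultaneously.

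The paper avoids the correction problem altogether by working degree by degree and exhibiting explicit integral preimages as sums of Thom classes: $\sum_c \Th(c)$ over connection paths for $w_2$, $\sum_e \Th(e)$ over edges for $w_4$, and $\sum_v \Th(v)$ over vertices for $w_6$. The only new ingredient is the construction of a degree-$2$ Thom class $\Th(c)\in H^2_T(\Gamma;\ZZ)$ attached to a connection path (Proposition~\ref{prop:thomembedded}); orientability enters there as a sign-consistency condition when the normal label is transported once around $c$. The $B^*$-components then match by direct inspection, because the sign choices built into these Thom classes are exactly the ones demanded by the definition of $f_e$ in Lemma~\ref{lem:defSW}. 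In effect, the paper \emph{chooses} the signs in the combinatorial Chern class via the connection-path decomposition so that no $2m$-correction is ever needed---which is morally your step two carried out, but with the obstruction dissolved by a good choice rather than solved after the fact.
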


To prove this we first prove the existence of combinatorial Thom classes of connection paths in $3$-valent orientable GKM graphs.

\begin{defn}
Given a connection path $c=e_1,\ldots,e_l$ in a GKM graph as well as a vertex $v$, we call an edge $e$ at $v$ \emph{normal to $c$} if there exists $j$ such that $t(e_{j-1})=i(e_j)=v$ and $e$ is normal to the edge path $e_{j-1},e_j$.
\end{defn}
Note that a vertex may have more than one edge normal to $c$, even if $(\Gamma,\alpha)$ is $3$-valent and orientable.
\begin{ex}
Consider a $2$-valent $2n$-gon with $2n$ vertices and alternating labels $\alpha_1=(1,0)$ and $\alpha_2=(0,1)$. Now take the product of this with the one edge graph of label $\alpha_3=(1,1)$. Note that
\[\alpha_i\equiv \pm \alpha_j\mod\alpha_k\]
for any choice of distinct $i,j,k$. Hence the congruence condition in the definition of a compatible connection is void and any collection of bijections $\{\nabla_e\colon E_{i(e)}\rightarrow E_{t(e)}~|~e\in E(\Gamma), \nabla_e(e)=\overline{e}\}$ defines a compatible connection. Now e.g.\ if one changes the standard product connection at a single edge $e$ belonging to one of the two copies of the $2n$-gon, there will be only a single connection path going through $e$ and which crosses it twice.
\end{ex}

Let $(\Gamma,\alpha)$ be a 3-valent orientable $T^2$-GKM graph with compatible connection $\nabla$ and $c=e_1,\ldots,e_l$ a connection path, i.e.\ a closed edge path satisfying $\nabla_{e_i} e_{i-1}=e_{i+1}$, where we set $e_0=e_l$ and $e_{l+1}=e_1$. We define a function $\beta$ on the set of \emph{oriented} edges of $\Gamma$, as follows: let $f_j$ denote the oriented edge at $i(e_j)$ (i.e., $i(f_j)=i(e_j)$) which is normal to the edge path $e_{j-1},e_j$. We lift the label $\alpha(f_1)$ arbitrarily to $\beta(f_1)\in \ZZ^2$. We transport this chosen sign along $c$ in the sense that, for $j=2,\ldots,l$, we let $\beta(f_j)\in \ZZ^2$ be an element that reduces to $\alpha(f_j)$ modulo $\pm$, with the sign chosen such that
\[
\beta(f_j)\equiv \beta(f_{j-1})\mod \alpha(e_{j-1}).
\]
Note that as connection paths are uniquely determined by two successive edges, every oriented edge can be normal to $c$ only at most once. Thus, $\beta$ is well-defined, but note that with an edge $e$, also the opposite edge $\bar{e}$ might be normal to $c$, and the signs of $\beta(e)$ and $\beta(\bar{e})$ are not directly related. We can extend the function $\beta$ by zero on all edges on which it is not yet defined.

\begin{prop}\label{prop:thomembedded}
Let $(\Gamma,\alpha)$ be a 3-valent orientable $T^2$-GKM graph. For any vertex $v$ we put
\[
\Th(c)_v:= \sum_{e\in E_v} \beta(e).
\]
This gives a well-defined cohomology class $\Th(c)\in H^2_T(\Gamma;\ZZ)$, which we call the \emph{Thom class of $c$}. It is uniquely defined up to a global sign.
\end{prop}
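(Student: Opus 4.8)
The plan is to check directly that the tuple $(\Th(c)_v)_v$ satisfies the congruences defining $H^2_T(\Gamma;\ZZ)$ in Definition~\ref{defn:eqgraphcohom}, and that the construction is canonical up to the sign of the initial lift $\beta(f_1)$. First I would record that $\beta$ is a genuine function on oriented edges: since a connection path is determined by two successive edges, the corners $(e_{j-1},e_j)$ of $c$ are pairwise distinct, hence the normal edges $f_1,\ldots,f_l$ are distinct, so the sequential prescription $\beta(f_j)\equiv\beta(f_{j-1})\bmod\alpha(e_{j-1})$ never assigns two values to one oriented edge. For the value at a normal edge I would use that $\beta(f_j)$ is a lift of the label $\alpha(f_j)$, i.e.\ $\beta(f_j)=\pm\tilde{\alpha}(f_j)$; in particular, whenever the normal edge happens to be an edge $e$ itself, its $\beta$-value is $\pm\tilde{\alpha}(e)\equiv 0\bmod\alpha(e)$. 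Finally, replacing the initial lift $\beta(f_1)$ by $-\beta(f_1)$ flips every $\beta(f_j)$ and hence negates $\Th(c)$, which yields the asserted uniqueness up to a global sign once well-definedness is shown.

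The core is the edge-wise congruence $\Th(c)_q\equiv\Th(c)_r\bmod\alpha(e)$ for an edge $e$ between $q$ and $r$. Here I would use the compatible connection $\nabla_e\colon E_q\to E_r$ to pair up the two sides. Since $\beta$ is supported on normal edges, $\Th(c)_q=\sum_{g\in E_q}\beta(g)$ and $\Th(c)_r=\sum_{h\in E_r}\beta(h)$, and I would split the summands by how $e$ meets $c$. In the $3$-valent case there are exactly two kinds of nonzero contributions at $q$: (a) the edge $e$ is itself normal at $q$, i.e.\ the corner of $c$ at $q$ avoids $e$, in which case the term is $\pm\tilde{\alpha}(e)\equiv 0\bmod\alpha(e)$ and drops out; and (b) one of the other two edges at $q$ is normal, which happens precisely when $c$ traverses $e$ at $q$, so that this traversal is a consecutive step $e=e_j$ of $c$ whose normal edges at the two endpoints are $f_j$ and $f_{j+1}$ with $\nabla_e(f_j)=f_{j+1}$ and $\beta(f_j)\equiv\beta(f_{j+1})\bmod\alpha(e)$ by construction. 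Thus every type-(b) term at $q$ is matched through $\nabla_e$ and a single step of $c$ with a congruent type-(b) term at $r$ (the case where $e$ is traversed in either direction, or twice, being handled identically), while all type-(a) terms vanish modulo $\alpha(e)$; summing gives the congruence.

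The one step that is not purely formal, and which I expect to be the main obstacle, is the closure of the sign transport around the closed path $c$. The matching in (b) relies on $\beta(f_j)\equiv\beta(f_{j+1})\bmod\alpha(e_j)$, which holds by construction for $j<l$, but for the last edge $e_l$ it requires the independently chosen value $\beta(f_1)$ to agree with the value transported from $\beta(f_l)$. This is exactly the triviality of the monodromy $\prod_{j}\epsilon_{f_j}$ of the normal-edge sign transport, where $\epsilon_{f_j}$ is the sign in $\tilde{\alpha}(f_j)\equiv\epsilon_{f_j}\tilde{\alpha}(\nabla_{e_j}f_j)\bmod\alpha(e_j)$. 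I would derive it from orientability (Definition~\ref{defn:orientablegkmgraph}): in the $3$-valent setting $\nabla_{e_j}$ sends $\bar{e}_{j-1}\mapsto e_{j+1}$ and $f_j\mapsto f_{j+1}$, so $\eta(e_j)=-\epsilon_{\bar{e}_{j-1}}\epsilon_{f_j}$, and the relation $\prod_j\eta(e_j)=1$ along $c$ must be shown to force $\prod_j\epsilon_{f_j}=1$.

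The technical heart is precisely this bookkeeping: checking that the factor $(-1)^l$ and the product $\prod_j\epsilon_{\bar{e}_{j-1}}$ coming from the path-straightening directions cancel, so that orientability is equivalent to triviality of the normal monodromy. Once this is in place the congruence at the closing edge $e_l$ also holds, so $\Th(c)$ is a well-defined class in $H^2_T(\Gamma;\ZZ)$, and by the first paragraph it is determined up to the global sign coming from the choice of $\beta(f_1)$. I would therefore structure the write-up as: well-definedness of $\beta$ and the sign observation; the edge congruence via the type-(a)/(b) dichotomy; and finally the orientability computation closing the loop.
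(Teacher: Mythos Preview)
Your plan is essentially the paper's argument: check that $\beta$ is a well-defined function on oriented edges, split the congruence $\Th(c)_q\equiv\Th(c)_r\bmod\alpha(e)$ into the ``$e$ is itself normal'' contributions (which are $\pm\tilde\alpha(e)\equiv 0$) and the ``$c$ traverses $e$'' contributions (matched pairwise by the transport congruence $\beta(f_j)\equiv\beta(f_{j+1})\bmod\alpha(e_j)$), and close the loop using orientability.

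There is, however, a genuine missing idea at the closure step. You correctly reduce it to the identity $(-1)^l\prod_j\epsilon_{\bar e_{j-1}}=1$, but calling this ``bookkeeping'' is misleading: it is exactly where the $T^2$ hypothesis enters, and the paper supplies it via a $2\times 2$ determinant. For any lifts $\beta_j\in\ZZ^2$ of $\alpha(e_j)$ one has $\beta_{j+1}=\epsilon_{\bar e_{j-1}}\beta_{j-1}+c_j\beta_j$ for some $c_j\in\ZZ$, hence $\det(\beta_j,\beta_{j+1})=-\epsilon_{\bar e_{j-1}}\det(\beta_{j-1},\beta_j)$; adjacent labels are linearly independent so these determinants are nonzero, and telescoping around the closed path yields the identity. (The paper phrases this slightly differently: it chooses path-indexed lifts so that the interior $\epsilon$'s are all $-1$, then uses the determinant to equate the two remaining boundary signs.) The individual signs $\epsilon_{\bar e_{j-1}}$ depend on the actual labels, not only on the combinatorics of $\Gamma$, so no purely formal sign count can produce this identity; the two-dimensionality of $H^2(BT;\ZZ)$ is genuinely used. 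With this ingredient in hand, your argument goes through.
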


\begin{proof}
We first claim that $\beta(f_1)\equiv \beta(f_l) \mod \alpha(e_l)$, which is the only congruence for normal edges along $c$ that is not automatically satisfied by construction.

In order to show this, we make use of the orientability of $\Gamma$. We lift the labels $\alpha(e_1)$ and $\alpha(e_2)$ to elements $\beta_1$ and $\beta_2\in \ZZ^2$. We choose this notation instead of the more usual $\tilde{\alpha}(e_i)$ as the connection path might traverse any edge twice, even in the same direction. Then, for $i=2,\ldots,l-1$ we lift $\alpha(e_i)$ to $\beta_i\in \ZZ^2$ in such a way that
\[
\beta_{i+1}\equiv -\beta_{i-1}\mod {\alpha}(e_{i}).
\]

The sign choices of $\beta_i$ and $\beta(f_i)$ were arranged such that $\beta_{i+1}\equiv \epsilon_i \beta_{i-1}\mod{\alpha}(e_i)$ holds with $\epsilon_i=-1$ for $i=2,\ldots,l-1$ but the value of $\epsilon_1,\epsilon_l$ is not yet determined. We note that $\det(\beta_{i-1},\beta_i)$ and $\det(\beta_i,\beta_{i+1})$ share the same sign if and only if $\epsilon_i=-1$. As the determinant expression arrives at its original value when moving around the edge path once, we infer that $\epsilon_1=\epsilon_l$.

By the sign choices for $\beta(f_i)$, it follows that $\eta(e_i)=1$ for $i=2,\ldots,l-1$ and hence by orientability of $\Gamma$ we have $\eta(e_1)=\eta(e_l)$. If $\eta(e_l)=-1$, then $1=\epsilon_l=\epsilon_1$. Then $\eta(e_1)=-1$ implies that $\beta(f_1)\equiv \beta(f_l)\mod {\alpha}(e_l)$. If $\eta(e_l)=1$ we arrive at the same conclusion as this forces $\epsilon_1=\epsilon_l=-1$ and thus $\eta_1=1$ implies $\beta(f_1)\equiv \beta(f_l)\mod {\alpha}(e_l)$.

Now we need to show that $\Th(c)$, as defined in the statement of the proposition, satisfies all congruence relations to be contained in the equivariant graph cohomology. For an edge $e$ that is not part of the connection path $c$, the values $\Th(c)_{i(e)}$ and $\Th(c)_{t(e)}$ are either zero or lifts of $\alpha(e)$ to $\ZZ^2$, which makes the congruence relation along $e$ trivially satisfied.

For an edge $e$ that is part of the connection path, $\Th(c)_{i(e)}$ and $\Th(c)_{t(e)}$ consist each of at most three summands, corresponding to parts of $c$ that traverse $i(e)$ respectively $t(e)$. Potential parts of $c$ with $e$ respectively $\bar{e}$ as normal edge are irrelevant for the congruence relation, as they contribute only lifts of $\alpha(e)$. The at most two other parts contribute summands of the form $\beta(f_j)$ and $\beta(f_{j+1})$ to $\Th(c)_{i(e)}$ respectively $\Th(c)_{t(e)}$, which satisfy the congruence relation by construction of $\beta$ and the argument in the first part of the proof.
\end{proof}

As a corollary of our construction of Thom classes, we obtain that the Stiefel--Whitney classes of a $3$-valent orientable GKM graph lie in the image of $H^*_T(\Gamma;\ZZ)\to H^*_T(\Gamma;\ZZ_2)\oplus B^\ast(\Gamma,2)$:
\begin{proof}[Proof of Proposition \ref{prop:SW3valent}]
We claim that the second equivariant Stiefel--Whitney class of $\Gamma$ is equal to the image of the sum $\sum_c \Th(c)$, where $c$ runs through all connection paths of $\Gamma$ (up to starting vertex and orientation). In a $3$-valent graph every edge at a vertex $v$ arises exactly once as the normal edge to an edge path through $v$. The sum $\sum_c \Th(c)$ therefore evaluates, at any vertex $v$, as the sum of the labels of the adjacent edges, with certain signs. As with $\ZZ_2$ coefficients the signs do not matter, this shows that the $H^2_T(\Gamma;\ZZ_2)$-components of the second Stiefel--Whitney class and of the image of $\sum_c \Th(c)$ coincide. As for the $B^2(\Gamma,2)$-component, consider any edge $e\in E(\Gamma,2)$. Denote by $e_1,e_2$ the other two edges at $i(e)$, as well as $f_i:=\nabla_e e_i$. Then the $B^2(\Gamma,2)$-component of the second Stiefel--Whitney class at $e$ is
\[
r\left(\frac{\tilde{\alpha}(e_1)+\tilde{\alpha}(e_2)-\tilde{\alpha}(f_1)-\tilde{\alpha}(f_2)}{\tilde{\alpha}(e)}\right),
\]
where the signs are chosen in any way such that that congruences $\tilde{\alpha}(e_i)\equiv \tilde{\alpha}(f_i) \mod \alpha(e)$ hold. But these congruences are fulfilled by the chosen signs in the Thom classes of the connection paths following the edge paths $e_1,e,f_1$ and $e_2,e,f_2$, by construction of the function $\beta$.

Concerning the fourth equivariant Stiefel--Whitney class of $\Gamma$, we claim
that it is equal to the image of the sum $\sum_e \Th(e)$ of all Thom classes of
edges of $\Gamma$. Recall from \cite[Definition 2.15]{2210.01856v1} that the
Thom class $\Th(e)\in H^4_T(\Gamma;\ZZ)$ of an edge $e$ is defined as follows:
with notation as before, i.e., $e_1,e_2$ and $f_1,f_2$ the other edges at $i(e)$
and $t(e)$ respectively, with $f_i=\nabla_e e_i$ and signs chosen such that
$\tilde{\alpha}(f_i)\equiv \tilde{\alpha}(e_i)\mod \alpha(e)$, then
\[
\Th(e)_v=\begin{cases} \tilde{\alpha}(e_1) \tilde{\alpha}(e_2) & v = i(e) \\ \tilde{\alpha}(f_1)\tilde{\alpha}(f_2) & v = t(e). \end{cases}
\] 
Firstly this shows that, denoting $E_v=\{e_1,e_2,e_3\}$ for a vertex $v$,
\[
\left(\sum_e \Th(e)\right)_v = \tilde{\alpha}(e_1)\tilde{\alpha}(e_2) + \tilde{\alpha}(e_1)\tilde{\alpha}(e_3) + \tilde{\alpha}(e_2)\tilde{\alpha}(e_3)
\]
which coincides, after passing to $\ZZ_2$ coefficients, with the $H^4_T(\Gamma;\ZZ_2)$-component of the fourth Stiefel--Whitney class of $\Gamma$. Now fix any edge $e\in E(\Gamma,2)$. Then the $B^4(\Gamma,2)$-component of the fourth Stiefel--Whitney class is by definition given by
\begin{align*}
r\left(\frac{\tilde{\alpha}(e_1)\tilde{\alpha}(e_2) +\tilde{\alpha}(e)\tilde{\alpha}(e_1) + \tilde{\alpha}(e)\tilde{\alpha}(e_2)- \tilde{\alpha}(f_1)\tilde{\alpha}(f_2) -\tilde{\alpha}(\bar{e})\tilde{\alpha}(f_1) -\tilde{\alpha}(\bar{e})\tilde{\alpha}(f_2)}{\tilde{\alpha}(e)}\right).
\end{align*}
which coincides with the $e$-component in $B^4(\Gamma,2)$ of the sum of Thom classes.

As for the sixth equivariant Stiefel--Whitney class, we claim that it is equal to
the image of the sum $\sum_v \Th(v)$ of the Thom classes of all vertices of
$\Gamma$. Recall from \cite{MR1823050}, cf.\ \cite[Definition 2.13]{2210.01856v1} that the Thom class $\Th(v)\in H^6_T(\Gamma;\ZZ)$ of a vertex $v$ is well-defined up to sign, and given by
\[
\Th(v)_u = \begin{cases} \prod_{e\in E_v} \tilde{\alpha}(e) & u=v \\ 0 & u \neq v,  \end{cases}
\] 
with signs chosen arbitrarily. 

This shows directly that the $H^6_T(\Gamma;\ZZ_2)$-part of $\sum_v \Th(v)$ and of the sixth Stiefel--Whitney class coincide. One computes that the $B^6(\Gamma,2)$-part of both classes vanishes.
\end{proof} 

\printbibliography
\end{document}